\renewenvironment{proof}[1][Proof]{\noindent\textit{#1. } }{\hfill$\square$}
\newtheoremstyle{theorem}{6pt}{6pt}{\rm}{}{\sffamily}{ }{ }{}
\theoremstyle{theorem}
\newtheorem{theorem}{\sc Theorem}[section]
\newtheoremstyle{lemma}{6pt}{6pt}{\rm}{}{\sffamily}{ }{ }{}
\theoremstyle{lemma}
\newtheorem{lemma}{\sc Lemma}[section]
\newtheoremstyle{example}{6pt}{6pt}{\rm}{}{\sffamily}{ }{ }{}
\theoremstyle{example}
\newtheoremstyle{corollary}{6pt}{6pt}{\rm}{}{\sffamily}{ }{ }{}
\theoremstyle{corollary}
\newtheoremstyle{definition}{6pt}{6pt}{\rm}{}{\sffamily}{ }{ }{}
\theoremstyle{definition}
\newtheorem{definition}[theorem]{\sc Definition}
\newtheoremstyle{remark}{6pt}{6pt}{\rm}{}{\sffamily}{ }{ }{}
\theoremstyle{remark}
\newtheoremstyle{approximation}{6pt}{6pt}{\rm}{}{\sffamily}{ }{ }{}
\theoremstyle{approximation}
\newtheoremstyle{scheme}{6pt}{6pt}{\rm}{}{\sffamily}{ }{ }{}
\theoremstyle{scheme}
\title{Navier-Stokes equations on Riemannian manifolds}
\author{
 Maryam Samavaki \\
  Department of Physics and Mathematics\\
  University of Eastern  Finland\\
  P.O. Box 111, FI-80101 Joensuu,Finland  \\
  \texttt{maryam.samavaki@uef.fi} \\
  \AND
  Jukka Tuomela \\
  Department of Physics and Mathematics\\
  University of Eastern  Finland\\
  P.O. Box 111, FI-80101 Joensuu,Finland  \\
  \texttt{jukka.tuomela@uef.fi} \\
}
\begin{document}
\maketitle

\begin{abstract}
We study properties of the solutions to Navier-Stokes system on compact Riemannian manifolds. The motivation for such a formulation comes from atmospheric models as well as some thin film flows on curved surfaces. There are different choices of the diffusion operator which have been used in previous studies, and we make a few comments why the choice adopted below seems to us the correct one.  This choice leads to the conclusion that Killing vector fields are essential in analyzing the qualitative properties of the flow.  We give several results illustrating this and analyze also the linearized version of Navier-Stokes system which is interesting in numerical applications. Finally we consider the 2 dimensional case which has specific characteristics, and treat also the Coriolis effect which is essential in atmospheric flows.
\end{abstract}

\keywords{Curvature tensor, Killing vector fields, Navier-Stokes equations, Riemannian manifolds}


\section{Introduction}
\label{intro}
Navier Stokes equations have been widely studied both form theoretical and applied points of view \cite{temam-ns}. In recent years there has been a growing interest of this system on Riemannian manifolds \cite{carati,chczdi,kobayashi,pierfelice,reuvoi,taylor3} and the many references therein. There seems to be two different reasons for this interest. First are the atmospheric models where the curvature of earth matters if one wants to simulate the flow in very large domains or even on the whole earth. The second are the flows of very thin films on the curved surfaces. Although these two applications are physically very different they both lead naturally to the idea of formulating  the Navier-Stokes equations on arbitrary Riemannian manifolds.

There have been different choices for the diffusion operator for the system on the manifolds, and we discuss first some reasons why we think that the choice adopted below is the appropriate one. The same choice is advocated also in \cite{chczdi,taylor3}. It turns out that this choice of diffusion operator has important consequences on the qualitative and asymptotic properties of the flow, and our choice implies that Killing vector fields are essential in the analysis.

Our main results concern the decomposition of the flows to Killing component and its orthogonal complement. The Killing vector field is actually a solution to the Navier-Stokes system, but due to nonlinearity the orthogonal complement satisfies a different system. However, it is possible to derive similar a priori estimates for this complement than to the total flow.  Interestingly similar conclusion remains valid when one replaces the diffusion operator with another operator and Killing fields with harmonic vector fields. Hence depending on the choice of the diffusion operator the solutions obtained are completely different asymptotically.

We will also analyze the linearized version of Navier-Stokes system. This is interesting at least from the point of view of numerical solution of Navier-Stokes system. Often one uses the idea of operator splitting in order to treat the linear diffusion term and the nonlinear convection term differently (a thorough overview of numerical methods for Navier-Stokes system is given in \cite{glowinski}). Then in some numerical methods one linearizes the convection term to advance the solution. We show that also the linearized version respects this decomposition to Killing fields and the orthogonal complement.

Moreover it turns out that one can produce new solutions with Lie bracket. Given a solution to a linearized system and a Killing field their bracket is also a solution to the linearized system. This is rather a technical result where we show that various differential operators behave well with respect bracket operation when one of the fields is a Killing field.

Since 2 dimensional manifolds are especially important in applications we analyze this special case more closely. In particular the sphere is relevant in meteorological applications so we consider this in detail.  It turns out that one can decompose also the vorticity in the same way as the flow field itself.  Since the vorticities of the Killing fields are simply the first spherical harmonics one can use this to get good a priori estimates. Finally we consider the case of Navier-Stokes on the sphere with the Coriolis term. In this case the Killing field along the latitudes is still a solution and asymptotically the solutions approach it. Note that here again the aymptotic properties depend essentially on the choice of the diffusion operator.

\section{Model and the diffusion operator}
The standard way to write the Navier Stokes equations  in $\mathbb{R}^n$ is as follows
\begin{align*}
  & u_t+u\nabla u-\mu \Delta u+\nabla p=f  \\
  & \nabla \cdot u=0
\end{align*}
Let us formulate this on an arbitrary Riemannian manifold $M$ with Riemannian metric $g$. Let $\nabla$ now denote the covariant derivative, and to avoid confusion we write $\mathsf{grad}(p)=g^{ij}p_{;j}$ for the gradient and  $\mathsf{div}(u)=\mathsf{tr}(\nabla u)=u^i_{;i}$ for the divergence.\footnote{Einstein summation convention is used where needed.} The nonlinear term is now $\nabla_u u=u^k_{;i}u^i$.

The diffusion term $\Delta u$ is more problematic since there are various ways to generalize the Laplacian for vectors. Let us consider some possiblities which are proposed.
 One choice is the \emph{Bochner Laplacian}, defined by the formula
\[
     \Delta_Bu=\mathsf{div}(g^{ij} u^k_{;i})=g^{ij} u^k_{;ij}
\]
This is perhaps mathematically natural, since this is in a sense the first thing that comes to mind, considering that the Laplacian of the scalar function is $\Delta f=g^{ij}f_{;ij}$.  However, apparently there is no physical justification for this choice.

The second is the Hodge Laplacian. This is initially defined for forms, but with the metric we extend it to vector fields. To this end it is convenient to express exterior derivative and its dual in terms of covariant derivatives. In \cite{chowetal} one can find the general formulas, but since the vector field case is sufficient for us let us see only this case. Hodge Laplacian for vector fields is given by formula $\Delta_Hu=-\sharp (\delta d+d\delta)\flat u$. Now obviously
\[
        \sharp d\delta\flat u=\mathsf{grad}(\mathsf{div}(u))=g^{kj} u^i_{;ij}
\]
Then for one form $\alpha$ we have $d\alpha=\alpha_{i;j}-\alpha_{j;i}$ and for a two form $\omega$ we have $\delta \omega=-g^{ij}\omega_{ik;j}$. Let us further define
\begin{equation}
   Au     =g^{ki}u^j_{;i}-g^{ij}u^k_{;i}
\label{harmo}
\end{equation}
Then we can write
\[
   \sharp \delta d\flat u=\mathsf{div}(Au)=g^{ki}u^j_{;ij}-g^{ij}u^k_{;ij}=g^{ki}u^j_{;ij}-\Delta_B u
\]
Now using the Ricci identity \eqref{general-ricci-identity} we obtain
\[
  \Delta_Hu=\mathsf{div}(Au)+\mathsf{grad}(\mathsf{div}(u))=\Delta_Bu-\mathsf{Ri}(u)
\]
where $\mathsf{Ri}$ is the Ricci tensor. In two and three dimensional cases we also have the familiar formulas
\begin{align*}
  \Delta_Hu=&\mathsf{grad}(\mathsf{div}(u))-\mathsf{Rot}(\mathsf{rot}(u))\tag{2D}\\
   \Delta_Hu=&\mathsf{grad}(\mathsf{div}(u))-\mathsf{curl}(\mathsf{curl}(u))\tag{3D}
\end{align*}
where the operators $\mathsf{rot}$, $\mathsf{Rot}$ and $\mathsf{curl}$ are defined in \ref{curl-ja-muut}.

However, one can argue that Hodge Laplacian is not appropriate for the present purposes either. Recall that a (Newtonian) fluid is characterized by the fact that the stress tensor is a function of deformation rate tensor \cite{temmir}. Classically the deformation rate tensor is $\tfrac{1}{2}\,(\nabla u+(\nabla u)^T)$. In the Riemannian case we set (omitting the factor $1/2$)
\[
   Su     =g^{ki}u^j_{;i}+g^{ij}u^k_{;i}
\]
Hence, as in \cite{chczdi,taylor3}, we get the diffusion operator $Lu=\mathsf{div}(Su)$.
\begin{lemma}
\[
  Lu=\Delta_B u+\mathsf{grad}(\mathsf{div}(u))+\mathsf{Ri}(u)
\]
\label{Lu0}
\end{lemma}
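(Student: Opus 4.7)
The strategy is a direct coordinate computation: expand $Lu=\mathsf{div}(Su)$, recognize one of the two resulting terms as $\Delta_B u$ straight from its definition, and reduce the remaining term to $\mathsf{grad}(\mathsf{div}(u))+\mathsf{Ri}(u)$ by a single application of the Ricci identity.

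Concretely, I would start from $Su^{kj}=g^{ki}u^j_{;i}+g^{ij}u^k_{;i}$ and take the divergence in the $j$-slot. Distributing the covariant derivative gives
\[
   Lu^k=g^{ki}u^j_{;ij}+g^{ij}u^k_{;ij}.
\]
The second summand is $\Delta_B u^k$ by the very definition recalled just before the Hodge-Laplacian discussion, so no work is required there.

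The first summand is where the Ricci identity intervenes. The indices on $u^j_{;ij}$ appear in the ``wrong'' order relative to $\mathsf{div}(u)=u^j_{;j}$, so I would invoke \eqref{general-ricci-identity} in the form $u^j_{;ij}=u^j_{;ji}+R^{j}{}_{\ell ij}u^\ell$. The first piece then rewrites as $(\mathsf{div}(u))_{;i}$, and contracting the Riemann tensor as $R^{j}{}_{\ell ij}=\mathsf{Ri}_{\ell i}$ identifies the second piece with the Ricci tensor acting on $u$. Raising the index $i$ via $g^{ki}$ produces $\mathsf{grad}(\mathsf{div}(u))^k+\mathsf{Ri}(u)^k$, and adding back $\Delta_B u^k$ gives the claimed identity.

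The only place where care is needed is the sign convention in the Ricci identity, since the formula depends on which side the curvature term is written; once that is fixed consistently with the definition of $\mathsf{Ri}$ used elsewhere in the paper, the identification is automatic. As a cross-check, one could instead decompose $Su=Au+2\,g^{ij}u^k_{;i}$ and reuse the formula $\mathsf{div}(Au)=g^{ki}u^j_{;ij}-\Delta_B u$ derived just above, which reduces the lemma to exactly the same Ricci-identity step. I do not anticipate any genuine obstacle; this is essentially a Weitzenböck-type bookkeeping computation.
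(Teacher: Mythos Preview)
Your proposal is correct and matches the paper's proof essentially line for line: both expand $\mathsf{div}(Su)$ into two terms, identify one as $\Delta_B u$ directly from its definition, and handle the other by a single application of the Ricci identity \eqref{general-ricci-identity} to swap the order of covariant derivatives and produce $\mathsf{grad}(\mathsf{div}(u))+\mathsf{Ri}(u)$. The only cosmetic difference is which of the two symmetric slots of $S_u$ you contract, and your optional cross-check via $Su=Au+2\,g^{ij}u^{k}_{;i}$ is a nice redundancy not present in the paper.
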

\begin{proof}First we compute
\[
Lu= g^{ki}u^j_{;ik}+g^{ij}u^{k}_{;ik}=\Delta_B u+g^{ij}u^{k}_{;ik}
\]
Using the Ricci Identity \eqref{general-ricci-identity}  we get
\[
  g^{ij}u^{k}_{;ik}=g^{ij}u^{k}_{;ki}+g^{ij}u^{\ell}\mathsf{Ri}_{i\ell}
  =\mathsf{grad}(\mathsf{div}(u))+\mathsf{Ri}(u)
\]
\end{proof}

So the operators $L$ and $\Delta_H$ give different signs for the curvature term.  Summarising we may say that Bochner Laplacian uses information about the whole of $\nabla u$ and ignores the curvature term, while $L$ uses the symmetric part, and Hodge Laplacian uses the antisymmetric part. The sign of the  curvature term is different in the symmetric and antisymmetric cases.

It seems to us that the operator $L$ is physically most natural candidate for the diffusion operator because it most naturally generalizes the constitutive laws which are used in the Euclidean spaces. Also in \cite{chczdi} the authors come to the conclusion that $L$ is the best choice.  However,  in \cite{kobayashi} it is argued that $\Delta_H$ should be used, at least in some situations. Also in \cite{carati} $\Delta_H$  is used while in \cite{pierfelice} Bochner Laplacian is used. We do not know how the choice of Bochner Laplacian or the Hodge Laplacian should be interpreted from the point of view of continuum mechanics.

So we take $L$ as our diffusion operator and proceed our analysis with it:
\begin{equation}
\begin{aligned}
  & u_t+\nabla_u u-\mu  L u+\mathsf{grad}( p)=0\\
  &  \mathsf{div}(u)=0
\end{aligned}
\label{ns}
\end{equation}
However, some of the results are valid whatever the choice of the Laplacian and we will analyze what kind of effect this choice has.

\section{Preliminaries and notation}
Let us now introduce some appropriate functional spaces, see  \cite{hebey} for more details. Let us  define the $L^2$ inner product for functions and vector fields by the formulas
\begin{align*}
       \langle f,h\rangle=&\int_M f\,h\,\omega_M       \\
       \langle u,v\rangle=&\int_M g(u,v)\omega_M
\end{align*}
where $\omega_M$ is the volume form (or Riemannian density if $M$ is not orientable). This gives the norm $\|u\|_{L^2}=\sqrt{ \langle u,u\rangle}$. Similarly we can introduce inner products for tensor fields. However, since we need this just for one forms and tensors of type $(1,1)$ we give the formulas only for this case. For one forms $\alpha$ and $\beta$ we can simply write $g(\alpha,\beta)=g(\sharp \alpha,\sharp\beta)=g^{ij}\alpha_i\beta_j$. Then let $T$ be a tensor of type $(1,1)$; pointwise $T$ can be interpreted as a map $T\,:\, T_pM\to T_pM$. Let $T^\ast$ be the adjoint, i.e.
\[
       g(Tu,v)=g(u,T^\ast v)
\]
for all $u$ and $v$. Then the inner product on the fibers can be defined by
\[
    g(T,B)=\mathsf{tr}(TB^\ast)=T^k_\ell g^{j\ell}B^i_j g_{ik}
      =T^{kj}B_{jk}
\]
In this way we can define the familiar Sobolev inner product
\[
  \langle u,v\rangle_{H^1}=\int_M \Big(g(u,v)+ g(\nabla u,\nabla v)\Big)\omega_M
\]
and the corresponding norm. Of course in a similar fashion more general Sobolev spaces can be defined but this is sufficient for our purposes. Finally let us recall that the divergence theorem remains valid in the following form:
\[
     \int_M \mathsf{div}(u)\omega_M=
     \int_{\partial M} g(u,\nu )\omega_{\partial M}
\]
Here $\nu$ is the outer unit normal and $\omega_{\partial M}$ is the volume form (or Riemannian density) induced by $\omega_M$. Note that orientability of $M$ is not needed.  From now on we will always suppose that $M$ is compact and without boundary.

Above we have viewed $S$ and $A$ as  differential operators which operate on $u$. Let us write $S_u$ and $A_u$ when we consider $Su$ and $Au$ as tensors of type $(1,1)$. Note that $S_u=S_u^\ast$ and $A_u=-A_u^\ast$.
The following vector fields are important in the subsequent analysis.
\begin{definition} Vector field $u$ is parallel if $\nabla u=0$, it is Killing if $Su=0$ and it is harmonic, if $\Delta_H u=0$.
\end{definition}
Equivalently we can say that  $u$ is Killing, if
\begin{equation}
  g(\nabla_v u,w)+g(\nabla_w u,v)=0
\label{killing-ehto}
\end{equation}
for all $v$ and $w$. Note that $\mathsf{div}(u)=0$ for Killing fields because $\mathsf{div}(u)=\tfrac{1}{2}\,\mathsf{tr}(S_u)$. If $M$ is compact and without boundary we have the following classical characterization for harmonic vector fields
\[
   \Delta_Hu=0\quad\Leftrightarrow\quad
\begin{cases}
    d\flat u=0 \\ \delta \flat u=0
    \end{cases}\quad\Leftrightarrow\quad
\begin{cases}
    Au=0\\ \mathsf{div}(u)=0
    \end{cases}
\]
where $Au$ is given in \eqref{harmo}. Hence in particular
\[
   g(\nabla_v u,w)-g(\nabla_w u,v)=0
\]
for all $v$ and $w$ if $u$ is harmonic.

There are severe topological restrictions for the existence of parallel vector fields \cite{welsh}.
Killing vector fields and harmonic vector fields are much more common.
Let us recall the following facts \cite{petersen}.
\begin{itemize}
 \item[(i)]If $M$ is $n$ dimensional then the Killing vector fields are a Lie algebra whose dimension is $\le \tfrac{1}{2}\,n(n+1)$ and the equality is attained for the standard sphere.
 \item[(ii)] the space of harmonic vector fields is isomorphic to the first de Rham cohomogy group of $M$. In particular this space is also always finite dimensional.
\end{itemize}

\begin{lemma} Let $u$, $v$  and  $w $ be  vector fields and $\mathsf{div}(v)=0$. Then
\[
   \int_{M}\Big( g(\nabla_v u,w )+g(\nabla_v w  ,u)\Big)\omega_{M}=0
\]
In particular
\[
   \int_{M} g(\nabla_v u,u )\omega_{M}=0
\]
\label{b-lemma}
\end{lemma}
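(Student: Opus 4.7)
The plan is to exploit the metric compatibility of the Levi--Civita connection, $\nabla g = 0$, to recognize the integrand as a divergence, and then invoke the divergence theorem recorded earlier (with $\partial M = \emptyset$).

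First I would set $f = g(u,w)$, a scalar function on $M$. Metric compatibility gives, for any vector field $v$,
\[
    v(f) = \nabla_v\bigl(g(u,w)\bigr) = g(\nabla_v u, w) + g(\nabla_v w, u).
\]
Thus the integrand in the claim equals $v(f)$.

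Next I would rewrite $v(f)$ as a divergence. Since $v(f) = g(v,\mathsf{grad}(f))$, the product rule for divergence yields
\[
    \mathsf{div}(f v) = f\,\mathsf{div}(v) + v(f) = v(f),
\]
using the hypothesis $\mathsf{div}(v) = 0$. Applying the divergence theorem stated in the preliminaries, together with the assumption that $M$ is compact without boundary, gives
\[
   \int_M v(f)\,\omega_M = \int_M \mathsf{div}(fv)\,\omega_M = 0,
\]
which is exactly the first identity. The second identity follows by specializing $w = u$, since then the integrand becomes $2\, g(\nabla_v u, u)$.

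No real obstacle is expected: the only subtle point is to make sure one is entitled to write $v(g(u,w)) = g(\nabla_v u,w) + g(\nabla_v w, u)$, which requires the Levi--Civita connection (not just any affine connection), and to have the compact-without-boundary hypothesis in force so that the boundary term in the divergence theorem vanishes.
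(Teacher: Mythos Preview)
Your proof is correct and follows essentially the same approach as the paper: both compute $\mathsf{div}\big(g(u,w)\,v\big)=g(u,w)\,\mathsf{div}(v)+g(\nabla_v u,w)+g(\nabla_v w,u)$ and then apply the divergence theorem on the compact boundaryless manifold. You simply make the two ingredients (metric compatibility of $\nabla$ and the product rule for $\mathsf{div}$) explicit, which the paper compresses into a single formula.
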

\begin{proof}
Since
\[
  \mathsf{div}\big(g(u,w )\,v\big)=g(u,w )\mathsf{div}(v)+g(\nabla_v u,w )
    +g(\nabla_v w ,u)
\]
the result follows from divergence theorem.
\end{proof}
\begin{lemma}
If $w$ is Killing and $\mathsf{div}(u)=\mathsf{div}(v)=0$  then
\[
    \int_{M}\big(g(\nabla_{v}u,w )+g(\nabla_{u}v,w )\big)\omega_{M}=0
\]
and if $w$ is harmonic and $\mathsf{div}(u)=\mathsf{div}(v)=0$  then
\[
    \int_{M}\big(g(\nabla_{v}u,w )-g(\nabla_{u}v,w )\big)\omega_{M}=0
\]
\label{c-lemma}
\end{lemma}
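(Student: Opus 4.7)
The plan is to reduce everything to Lemma 1.2 (the $b$-lemma), using it to move the covariant derivative off of $u$ and $v$ and onto $w$, where the Killing or harmonic hypothesis can be invoked directly.

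First I would apply Lemma 1.2 to the pair $(v,u,w)$: since $\mathsf{div}(v)=0$, we get
\[
\int_M g(\nabla_v u,w)\,\omega_M = -\int_M g(\nabla_v w,u)\,\omega_M.
\]
Then, symmetrically, I would apply Lemma 1.2 with the roles of $u$ and $v$ swapped: since $\mathsf{div}(u)=0$,
\[
\int_M g(\nabla_u v,w)\,\omega_M = -\int_M g(\nabla_u w,v)\,\omega_M.
\]

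Adding these two identities gives
\[
\int_M\bigl(g(\nabla_v u,w)+g(\nabla_u v,w)\bigr)\omega_M
= -\int_M\bigl(g(\nabla_v w,u)+g(\nabla_u w,v)\bigr)\omega_M,
\]
and the right-hand side vanishes pointwise by the Killing condition \eqref{killing-ehto} applied with $(v,w)$ in the roles of the two test fields for $u\leftrightarrow w$. That settles the first identity.

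For the second identity I would subtract instead of adding, obtaining
\[
\int_M\bigl(g(\nabla_v u,w)-g(\nabla_u v,w)\bigr)\omega_M
= -\int_M\bigl(g(\nabla_v w,u)-g(\nabla_u w,v)\bigr)\omega_M.
\]
The harmonic assumption on $w$ together with $\mathsf{div}(w)=0$ is equivalent to $Aw=0$, which by the remark just after the definition of harmonic vector fields means precisely $g(\nabla_v w,u)-g(\nabla_u w,v)=0$ for all $v,u$. So the right-hand side vanishes and the proof is complete. There is no real obstacle here; the only point to be careful about is to keep track of which hypothesis ($\mathsf{div}(u)=0$ vs.\ $\mathsf{div}(v)=0$) is needed to invoke the $b$-lemma in each of the two integrations by parts.
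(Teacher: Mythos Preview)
Your proof is correct and follows exactly the same route as the paper: apply the $b$-lemma twice (once using $\mathsf{div}(v)=0$, once using $\mathsf{div}(u)=0$) to shift the derivatives onto $w$, then invoke the Killing condition \eqref{killing-ehto} (respectively the characterization $Aw=0$ for harmonic $w$) to conclude. The paper's version is just more compressed, and your explicit tracking of which divergence-free hypothesis is used at each step is a helpful clarification.
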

\begin{proof}
Using Lemma \ref{b-lemma} and formula (2) we obtain
\[
   \int_{M}\big(g(\nabla_{v}u,w )+g(\nabla_{u}v,w )\big)\omega_{M}=-\int_{M}\big(g(\nabla_{v}w,u)+g(\nabla_{u}w,v)\big)\omega_{M} =0
\]
The proof of the second statement is analogous.
\end{proof}

From this we immediately get
\begin{lemma}
If either (i) $u$ is Killing  and $\mathsf{div}(v)=0$ or  (ii) $v$ is Killing and  $\mathsf{div}(u)=0$ then
\[
    \int_{M}g(\nabla_{u}u,v )\omega_{M}=
    \int_{M}g(\nabla_{u}v,u )\omega_{M}=0
\]
and if $u$ is harmonic  and $\mathsf{div}(v)=0$ then
\[
    \int_{M}g(\nabla_{u}u,v )\omega_{M}=0
\]
\label{d-lemma}
\end{lemma}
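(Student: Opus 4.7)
The plan is to deduce everything by specializing Lemma \ref{c-lemma} and then appealing to Lemma \ref{b-lemma}, exploiting the fact that Killing and harmonic vector fields are automatically divergence-free (noted just after \eqref{killing-ehto} for the Killing case and in the classical characterization recalled above for the harmonic case).

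For case (i), with $u$ Killing, I would apply Lemma \ref{c-lemma} taking the Killing field to be $u$ itself, and the two divergence-free arguments to be $v$ and $u$. This produces
\[
  \int_M \big(g(\nabla_v u, u)+g(\nabla_u v, u)\big)\omega_M=0,
\]
and since $\mathsf{div}(v)=0$ Lemma \ref{b-lemma} kills the first summand, leaving $\int_M g(\nabla_u v, u)\omega_M=0$. A second use of Lemma \ref{b-lemma}, now with the divergence-free field $u$ itself and test field $v$, converts this into $\int_M g(\nabla_u u, v)\omega_M=0$.

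Case (ii) is even quicker: taking the Killing field in Lemma \ref{c-lemma} to be $v$ and both divergence-free arguments equal to $u$ collapses the two summands and gives $\int_M g(\nabla_u u, v)\omega_M=0$ directly, after which the companion integral follows from Lemma \ref{b-lemma} exactly as in case (i).

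The harmonic case uses the second (antisymmetric) statement of Lemma \ref{c-lemma}: with $w=u$ (harmonic, hence divergence-free) and divergence-free arguments $v$ and $u$, one obtains
\[
  \int_M\big(g(\nabla_v u, u)-g(\nabla_u v, u)\big)\omega_M=0.
\]
Again the first summand vanishes by Lemma \ref{b-lemma}, so $\int_M g(\nabla_u v, u)\omega_M=0$, and one further application of Lemma \ref{b-lemma} yields $\int_M g(\nabla_u u, v)\omega_M=0$. The main obstacle, such as it is, is purely notational: one has to pick the substitutions in Lemma \ref{c-lemma} so that its symmetric or antisymmetric combinations reduce to a single one of the target integrals, the other then being essentially free from Lemma \ref{b-lemma}.
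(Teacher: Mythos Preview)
Your proposal is correct and takes essentially the same approach as the paper, which simply records that the lemma ``follows directly from previous Lemmas''; you have merely made explicit the substitutions into Lemma~\ref{c-lemma} and Lemma~\ref{b-lemma} that the paper leaves to the reader.
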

\begin{proof} Follows directly from previous Lemmas.
\end{proof}
\begin{lemma} Let $u$ and $v$ be  vector fields. Then
\begin{align*}
  &\int_{M}g(\Delta_B u,v)\omega_{M}+\int_{M} g(\nabla u,\nabla v)\omega_{M}=0  \\
  &\int_{M}g(Lu,v)\omega_{M}+\tfrac{1}{2}\int_{M} g(S_u,S_v)\omega_{M}=0
\end{align*}
Hence $\langle \Delta_B u,v\rangle=\langle u, \Delta_B v\rangle$ and $\langle L u,v\rangle=\langle u, Lv\rangle$.
\label{var-lemma}
\end{lemma}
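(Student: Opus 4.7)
The plan is to realize each integrand as the divergence of a suitable vector field and invoke the divergence theorem recalled just above; since $M$ is compact without boundary the boundary integrals vanish, leaving exactly the identities stated. The choice of auxiliary field is dictated in each case by the Leibniz rule: we want the product rule to produce precisely the two terms appearing in the identity.

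For the Bochner identity, I would introduce
\[
   W^j \;=\; g^{ji}\,u^k_{\,;i}\,v_k ,
\]
that is, the contraction of $\nabla u$ against $v$ with the free index raised. Because the Levi-Civita connection is metric compatible ($\nabla g=0$), the product rule yields
\[
   \mathsf{div}(W) \;=\; g^{ji} u^k_{\,;ij} v_k \,+\, g^{ji}u^k_{\,;i}\,v_{k;j}
                   \;=\; g(\Delta_B u,v) \,+\, g(\nabla u,\nabla v),
\]
where the first equality uses $(\Delta_B u)^k=g^{ji}u^k_{\,;ij}$ and the second matches the fiberwise pairing $g(T,B)=T^{kj}B_{jk}$ introduced earlier. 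Integrating over $M$ and applying the divergence theorem gives the first identity.

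For the identity involving $L$, the same idea applies to the symmetric $(2,0)$-tensor $S_u$. Set $W^b = (S_u)^{ab}\,v_a$. Since $\mathsf{div}(S_u)=Lu$ by definition,
\[
   \mathsf{div}(W) \;=\; (Lu)^a v_a \,+\, (S_u)^{ab} v_{a;b}
                   \;=\; g(Lu,v) \,+\, (S_u)^{ab} v_{a;b}.
\]
The only step that is not a bare product-rule calculation is the symmetrization of the last term: because $(S_u)^{ab}$ is symmetric in $a,b$, and a direct index computation gives $(S_v)_{ab}=v_{a;b}+v_{b;a}$, we obtain
\[
   (S_u)^{ab} v_{a;b} \;=\; \tfrac{1}{2}(S_u)^{ab}\bigl(v_{a;b}+v_{b;a}\bigr)
                     \;=\; \tfrac{1}{2}\,g(S_u,S_v).
\]
Integrating and invoking the divergence theorem once more yields the second identity.

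Self-adjointness is then immediate: both $\int_M g(\nabla u,\nabla v)\,\omega_M$ and $\int_M g(S_u,S_v)\,\omega_M$ are visibly symmetric in $u,v$, so comparing the identities for the pairs $(u,v)$ and $(v,u)$ gives $\langle\Delta_B u,v\rangle=\langle u,\Delta_B v\rangle$ and $\langle Lu,v\rangle=\langle u,Lv\rangle$. No real analytical obstacle arises; the only care needed is the index bookkeeping in the symmetrization for $S_u$ and the verification that $(S_u)^{ab}{}_{;b}=(Lu)^a$ (which, by symmetry of $S_u$, is independent of the index on which one takes the divergence).
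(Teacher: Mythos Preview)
Your proof is correct and follows essentially the same route as the paper: in both cases one writes the integrand as the divergence of the natural auxiliary vector field ($g^{ij}u^k_{;i}v_k$ for the Bochner case and $S_u v$ for the $L$ case) and applies the divergence theorem on the closed manifold. Your explicit symmetrization step $(S_u)^{ab}v_{a;b}=\tfrac12 g(S_u,S_v)$ is what the paper compresses into a single equality, and the self-adjointness conclusion is drawn identically.
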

\begin{proof} We compute
\begin{align*}
     \mathsf{div}(g^{ij} u^k_{;i}g_{k\ell}v^\ell)=&g^{ij} u^k_{;ij}g_{k\ell}v^\ell+g^{ij} u^k_{;i}g_{k\ell}v^\ell_{;j}=g(\Delta_B u,v)+g(\nabla u,\nabla v)  \\
     \mathsf{div}(S_u v)=&g^{ki}u^j_{;ik}g_{j\ell}v^\ell+g^{ki}u^j_{;i}g_{j\ell}v^\ell_{;k}+u^k_{;\ell k}v^\ell+u^k_{;\ell}v^\ell_{;k}    \\
       =&\tfrac{1}{2}\,g(S_u,S_v)+g(L u,v)
\end{align*}
The result now follows from the divergence theorem.
\end{proof}

Note that the above Lemma, the relationships between the Laplacians and the operator $L$ imply that for divergence free vector fields
\[
     \Big|\int_M\mathsf{Ri}(u,u)\omega_{M}\Big|\le \int_{M} g(\nabla u,\nabla u)\omega_{M}
\]
and
\begin{align*}
    & \int_M\mathsf{Ri}(u,u)\omega_{M}=0\quad\mathrm{if\ }u\mathrm{\ is\ parallel}   \\
    & \int_M\mathsf{Ri}(u,u)\omega_{M}=\int_{M} g(\nabla u,\nabla u)\omega_{M}\quad\mathrm{if\ }u\mathrm{\ is\ Killing}   \\
    & \int_M\mathsf{Ri}(u,u)\omega_{M}=-\int_{M} g(\nabla u,\nabla u)\omega_{M}\quad\mathrm{if\ }u\mathrm{\ is\ harmonic}
\end{align*}
So Killing vector fields and harmonic vector fields are at the ''opposite extremes'' with respect to curvature.

\section{Solutions to Navier-Stokes system and Killing fields}
Let us then start to analyze the properties of the solutions to \eqref{ns}. Let us first recall the following facts which are easy to check:
\begin{itemize}
\item[(i)] if $u$ is parallel and $p$ is constant then $(u,p)$ is a solution of Navier Stokes equations with Bochner Laplacian.
\item[(ii)] if $u$ is Killing and $p=\tfrac{1}{2}\,g(u,u)$ then $(u,p)$ is a solution of \eqref{ns}.
\item[(iii)] if $u$ is harmonic and $p=-\tfrac{1}{2}\,g(u,u)$ then $(u,p)$ is a solution of Navier Stokes equations with Hodge Laplacian.
\end{itemize}
Our first main result says that the component of any solution in the space of Killing fields remains constant.
\begin{theorem} Let $u$ be a solution of \eqref{ns} and $v$ be Killing. Then
\[
  \frac{d}{dt}\langle u,v\rangle=0
\]
\label{energia-1}
\end{theorem}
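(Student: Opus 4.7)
The plan is to take the $L^2$ inner product of both sides of the Navier-Stokes equation \eqref{ns} with the Killing field $v$, then show that every term except the time derivative vanishes, invoking the lemmas just proved.

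First I would note that $v$ is time-independent, so $\langle u_t,v\rangle = \tfrac{d}{dt}\langle u,v\rangle$. The equation then reads
\[
\tfrac{d}{dt}\langle u,v\rangle + \langle \nabla_u u,v\rangle - \mu\langle Lu,v\rangle + \langle \mathsf{grad}(p),v\rangle = 0,
\]
so it suffices to verify the three remaining terms are zero.

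Next I would handle each term by citing the appropriate result. The nonlinear term $\langle \nabla_u u,v\rangle$ vanishes by Lemma \ref{d-lemma}(ii), since $v$ is Killing and $\mathsf{div}(u)=0$. For the diffusion term, the second identity of Lemma \ref{var-lemma} gives $\langle Lu,v\rangle = -\tfrac{1}{2}\int_M g(S_u,S_v)\omega_M$, and $S_v\equiv 0$ because $v$ is Killing, so this term is zero. For the pressure term, the identity
\[
\mathsf{div}(pv) = g(\mathsf{grad}(p),v) + p\,\mathsf{div}(v)
\]
together with the divergence theorem yields $\langle \mathsf{grad}(p),v\rangle = -\int_M p\,\mathsf{div}(v)\,\omega_M$, which vanishes because every Killing field is divergence free (as noted after Definition of Killing, since $\mathsf{div}(v)=\tfrac12\mathsf{tr}(S_v)=0$).

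Combining, $\tfrac{d}{dt}\langle u,v\rangle=0$. There is really no serious obstacle here: the work was already done in establishing Lemmas \ref{b-lemma}--\ref{var-lemma} and in observing $\mathsf{div}(v)=0$ and $S_v=0$ for Killing $v$; the only thing to be careful about is the sign and pairing conventions so that each of the three lemmas applies to the correct term without a stray factor.
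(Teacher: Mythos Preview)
Your proof is correct and follows essentially the same approach as the paper: take the inner product with $v$, then kill the convection term via Lemma \ref{d-lemma}, the diffusion term via Lemma \ref{var-lemma} together with $S_v=0$, and the pressure term via integration by parts and $\mathsf{div}(v)=0$. The only cosmetic difference is that the paper invokes the self-adjointness $\langle Lu,v\rangle=\langle u,Lv\rangle=0$ rather than the formula $\langle Lu,v\rangle=-\tfrac{1}{2}\int_M g(S_u,S_v)\,\omega_M$, but both come from Lemma \ref{var-lemma} and are equivalent here.
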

\begin{proof} First
\[
   \frac{d}{dt}\langle u,v\rangle= \langle u_t,v\rangle=
   -\langle \nabla_u u,v\rangle+\mu  \langle L  u,v\rangle-\langle \mathsf{grad}(p),v\rangle
\]
Then $\langle \nabla_u u,v\rangle=0$ by Lemma \ref{d-lemma}, $\langle L u,v\rangle=\langle u, Lv\rangle=0$ by Lemma \ref{var-lemma} and because $v$ is Killing, and $\langle \mathsf{grad}(p),v\rangle=-\langle p,\mathsf{div}(v)\rangle=0$ because $\mathsf{div}(v)=0$.
\end{proof}

In other words any solution can be decomposed as $u=u^K+u^\perp$ where $u^K$ is Killing and $u^\perp$ is orthogonal to Killing fields. One may view $u^K$ as a projection of the initial condition to the space of Killing fields. But then precisely with the same argument we get
\begin{theorem} Let $u$ be a solution of Navier Stokes equations with Hodge Laplacian and let $v$ be harmonic. Then
\[
     \frac{d}{dt}\langle u,v\rangle=0
\]
\label{energia-2}
\end{theorem}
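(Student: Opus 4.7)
The strategy I would follow is to mirror the proof of Theorem~\ref{energia-1} almost verbatim, replacing the diffusion operator $L$ by the Hodge Laplacian $\Delta_H$ and the Killing hypothesis on $v$ by the harmonic hypothesis. First I would commute the time derivative with the integral (legitimate since $v$ is time-independent) and substitute the Navier--Stokes equation with Hodge Laplacian for $u_t$, obtaining
\[
\frac{d}{dt}\langle u,v\rangle = \langle u_t,v\rangle = -\langle \nabla_u u,v\rangle + \mu\langle \Delta_H u,v\rangle - \langle \mathsf{grad}(p),v\rangle.
\]
The three resulting terms would then be disposed of one by one, in direct parallel with the corresponding three terms in the proof of Theorem~\ref{energia-1}.

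The pressure term vanishes for the same reason as before: the classical characterization of harmonic vector fields recorded in Section~3 gives $\mathsf{div}(v)=0$, so integration by parts yields $\langle \mathsf{grad}(p),v\rangle = -\langle p,\mathsf{div}(v)\rangle = 0$. The diffusion term is handled by self-adjointness of the Hodge Laplacian on the closed manifold $M$, a classical identity that can be established by exactly the same divergence-theorem computation as in Lemma~\ref{var-lemma}; combined with $\Delta_H v=0$ this gives $\langle \Delta_H u,v\rangle = \langle u,\Delta_H v\rangle = 0$.

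The nonlinear term $\langle \nabla_u u,v\rangle$ is where I would expect the main obstacle to lie, because the analogy with Theorem~\ref{energia-1} is most delicate here. In the Killing case the Killing identity $g(\nabla_a v,b)+g(\nabla_b v,a)=0$ specializes at coincident arguments to the pointwise cancellation $g(\nabla_a v,a)=0$, which together with Lemma~\ref{b-lemma} immediately forces $\langle \nabla_u u,v\rangle=0$. For harmonic $v$ the corresponding antisymmetric condition $g(\nabla_a v,b)=g(\nabla_b v,a)$ collapses to a tautology at $a=b$, so no pointwise reduction is available. I would therefore invoke instead the harmonic counterpart of Lemma~\ref{d-lemma}: combining the harmonic part of Lemma~\ref{c-lemma} with Lemma~\ref{b-lemma} in an integrated fashion, paralleling the derivation of the harmonic case of Lemma~\ref{d-lemma} itself, one should obtain $\int_M g(\nabla_u u,v)\omega_M = 0$ and close the argument.
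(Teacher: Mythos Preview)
Your outline matches the paper's proof step for step: the same three terms, the same treatment of the pressure and diffusion contributions, and the same appeal to Lemma~\ref{d-lemma} for the nonlinear term. The gap lies exactly where you sensed it might.

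The harmonic clause of Lemma~\ref{d-lemma} reads: if $u$ is \emph{harmonic} and $\mathsf{div}(v)=0$ then $\int_M g(\nabla_u u,v)\,\omega_M=0$. Its derivation uses the non-tautological instance $g(\nabla_v u,u)=g(\nabla_u u,v)$ of the closedness condition $A_u=0$, combined with $\int g(\nabla_v u,u)\,\omega_M=0$ from Lemma~\ref{b-lemma}. In the theorem, however, the harmonic field is the \emph{test} field $v$ and the field inside $\nabla_u u$ is merely the divergence-free solution. With the roles exchanged, every invocation of $A_v=0$ lands on the coincident pair $(u,u)$ and is vacuous; combining Lemmas~\ref{b-lemma} and~\ref{c-lemma} as you propose only returns the identity you started from. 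In fact the vanishing you need is false in general: on the torus with metric $dx^2+f(x)^2\,dy^2$ the field $w=f^{-1}\partial_x$ is harmonic (and not Killing once $f'\not\equiv 0$), the field $u=h(x)\,\partial_y$ is divergence-free, and one computes
\[
\int_M g(\nabla_u u,w)\,\omega_M=-\pi\int_0^{2\pi}(f^2)'\,h^2\,dx,
\]
which is nonzero for generic choices (e.g.\ $f=2+\cos x+\tfrac12\sin x$, $h=\sin x$). The paper's own proof simply cites Lemma~\ref{d-lemma} at this step and so inherits the same gap; neither argument establishes $\langle\nabla_u u,v\rangle=0$.
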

\begin{proof} Now we have
\[
   \frac{d}{dt}\langle u,v\rangle= \langle u_t,v\rangle=
   -\langle \nabla_u u,v\rangle+\mu  \langle \Delta_H  u,v\rangle-\langle \mathsf{grad}(p),v\rangle
\]
Evidently $\langle \Delta_H u,v\rangle=\langle u, \Delta_Hv\rangle=0$ and
 $\langle \nabla_u u,v\rangle=0$ by Lemma \ref{d-lemma}.
\end{proof}

Let us then continue with system \eqref{ns}. The whole dynamics of the solution $u=u^K+u^\perp$ thus  happens in the component $u^\perp$. Then writing $p=p_K+p_\perp$ where $p_K=\tfrac{1}{2}\,g(u^K,u^K)$  we get the following system for $u^\perp$:
\begin{equation}
\begin{aligned}
  & u_t^\perp+\nabla_{u^\perp} u^K+\nabla_{u^K} u^\perp+\nabla_{u^\perp} u^\perp-\mu \, L u^\perp+ \mathsf{grad}( p_\perp)=0  \\
  &   \mathsf{div}( u^\perp)=0
\end{aligned}
\label{ns-perp}
\end{equation}
In the absence of forces acting on the system one expects that $u^\perp$ would approach zero when $t\to \infty$. To state this precisely we need a short digression.
Let us first define
\begin{align*}
         V_P=&\{ u\in H^1(M)\,|\,  \|u\|=1\ ,\  \langle u,v\rangle=0\ \mathrm{for\ all\ parallel}\ v\}  \\
           V_K=&\{ u\in H^1(M)\,|\,    \|u\|=1\ ,\   \langle u,v\rangle=0\ \mathrm{for\ all\ Killing}\ v\}  \\
            V_H=&\{ u\in H^1(M)\,|\,  \|u\|=1\ ,\  \langle u,v\rangle=0\ \mathrm{for\ all\ harmonic}\ v\}
\end{align*}
Then we can set
\begin{align*}
  \alpha_P=&\inf_{u\in V_P} \int_M g(\nabla u,\nabla u)\omega_{M} \\
   \alpha_K=&\inf_{u\in V_K} \int_M g(S_u,S_u)\omega_{M}  \\
      \alpha_H=&\inf_{u\in V_H} \int_M \Big(g(A_u,A_u)+\mathsf{div}(u)^2\Big)\omega_{M}
\end{align*}
What are the values of these constants? We could not find anything in the literature. The book \cite{hebey} treats extensively topics which are directly related, but everything is about functions, not vector fields. Anyway let us show that these numbers actually are positive.
\begin{theorem} The numbers $\alpha_P$, $\alpha_K$ and $\alpha_H$ are strictly positive and we have  Poincar\'e type inequalities:
\begin{align*}
    &  \alpha_P \int_M g(u,u)\omega_{M}\le  \int_M g(\nabla u,\nabla u)\omega_{M}\quad,\quad \forall\ u\in V_P    \\
    &  \alpha_K \int_M g(u,u)\omega_{M}\le  \int_M g(S_u,S_u)\omega_{M}\quad,\quad \forall\ u\in V_K    \\
    &\alpha_H\int_M g(u,u)\omega_{M}\le \int_M \Big(g(A_u,A_u)+\mathsf{div}(u)^2\Big)\omega_{M}
    \quad,\quad \forall\ u\in V_H
\end{align*}
\label{poincare-type}
\end{theorem}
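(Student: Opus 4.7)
The strategy for all three inequalities is identical: show $\alpha>0$ by contradiction, using the compactness of $H^1(M)\hookrightarrow L^2(M)$ (Rellich--Kondrachov on the compact boundaryless manifold $M$). Fix one of the cases, let $V$ denote the corresponding test space and $Q(u)$ the integral on the right hand side of the claimed inequality. Assume $\alpha=0$ and pick $\{u_n\}\subset V$ with $\|u_n\|_{L^2}=1$ and $Q(u_n)\to 0$; the goal is to derive a contradiction.

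The central step is a uniform $H^1$-bound for $\{u_n\}$. This is automatic for $\alpha_P$, because $Q(u)=\|\nabla u\|_{L^2}^2$ already is the Bochner energy. For $\alpha_K$ and $\alpha_H$ I would extract a Korn-type identity by combining Lemma~\ref{Lu0} with the second identity of Lemma~\ref{var-lemma} at $v=u$, together with the integration by parts $\langle\mathsf{grad}(\mathsf{div}(u)),u\rangle=-\int_M\mathsf{div}(u)^2\omega_M$. This produces
\[
\tfrac12\int_M g(S_u,S_u)\,\omega_M=\int_M g(\nabla u,\nabla u)\,\omega_M+\int_M\mathsf{div}(u)^2\,\omega_M-\int_M\mathsf{Ri}(u,u)\,\omega_M.
\]
Combined with the pointwise decomposition $4g(\nabla u,\nabla u)=g(S_u,S_u)+g(A_u,A_u)$, which holds because $S_u$ is symmetric and $A_u$ antisymmetric as $(1,1)$-tensors (so $g(S_u,A_u)=\mathsf{tr}(S_uA_u^*)=0$), a second rearrangement yields the analogous identity with $A_u$ replacing $S_u$. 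Since $M$ is compact, $\mathsf{Ri}$ is pointwise bounded and $|\int_M\mathsf{Ri}(u,u)\omega_M|\leq C\|u\|_{L^2}^2$, so each identity supplies an estimate of the form $\|\nabla u\|_{L^2}^2\leq c\,Q(u)+C\|u\|_{L^2}^2$. Hence $\{u_n\}$ is bounded in $H^1$.

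Extract a subsequence with $u_n\rightharpoonup u$ in $H^1$ and $u_n\to u$ strongly in $L^2$, so in particular $\|u\|_{L^2}=1$. Weak lower semicontinuity of the seminorm $\sqrt{Q(\cdot)}$ and $Q(u_n)\to 0$ force $Q(u)=0$, hence $\nabla u=0$, $S_u=0$, or $A_u=0$ together with $\mathsf{div}(u)=0$, in the three respective cases; thus $u$ is itself parallel, Killing, or harmonic. The orthogonality $\langle u_n,v\rangle=0$ for every $v$ in the corresponding finite-dimensional subspace passes to the $L^2$-limit, so choosing $v=u$ gives $\langle u,u\rangle=0$, contradicting $\|u\|_{L^2}=1$. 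This proves $\alpha>0$, and the displayed Poincar\'e inequalities follow by homogeneity, rescaling an arbitrary admissible $u$ to unit $L^2$-norm.

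The main obstacle is securing the Korn-type identity underpinning the $H^1$-bound in the Killing and harmonic cases: neither $\int_M g(S_u,S_u)\omega_M$ nor $\int_M(g(A_u,A_u)+\mathsf{div}(u)^2)\omega_M$ dominates the Bochner energy pointwise, and it is precisely the sign of the Ricci correction supplied by Lemma~\ref{Lu0}, combined with the pointwise boundedness of $\mathsf{Ri}$ on the compact $M$, that converts the identity into a one-sided bound. Once this is in hand the rest is a routine weak-compactness argument.
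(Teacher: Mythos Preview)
Your argument is correct and follows the same compactness scheme as the paper: take a minimizing (or, in your contradiction variant, a vanishing) sequence, apply Rellich--Kondrakov, and use weak lower semicontinuity of the quadratic form together with the constraint passing to the $L^2$-limit. The paper phrases this as a direct minimization (the infimum is attained at some $\hat u$ in the test space, and $Q(\hat u)>0$ because $\hat u$ is nonzero and orthogonal to the kernel), whereas you argue by contradiction, but this is only a cosmetic difference.

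Where your proof genuinely adds something is the $H^1$-boundedness step for $\alpha_K$ and $\alpha_H$. The paper treats only $\alpha_P$ explicitly and asserts that ``all cases are essentially the same'', but for $\alpha_K$ and $\alpha_H$ the minimizing sequence is not a priori bounded in $H^1$: control of $\int g(S_u,S_u)$ or $\int\big(g(A_u,A_u)+\mathsf{div}(u)^2\big)$ does not give control of $\int g(\nabla u,\nabla u)$ without a Korn-type inequality. Your derivation of the identity
\[
\tfrac12\int_M g(S_u,S_u)\,\omega_M=\int_M g(\nabla u,\nabla u)\,\omega_M+\int_M\mathsf{div}(u)^2\,\omega_M-\int_M\mathsf{Ri}(u,u)\,\omega_M
\]
from Lemmas~\ref{Lu0} and~\ref{var-lemma}, together with the pointwise orthogonal splitting $4g(\nabla u,\nabla u)=g(S_u,S_u)+g(A_u,A_u)$ and the boundedness of $\mathsf{Ri}$ on the compact $M$, is exactly what is needed to close this gap and obtain $\|\nabla u\|_{L^2}^2\le c\,Q(u)+C\|u\|_{L^2}^2$. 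So your version is in fact more complete than the paper's own proof.
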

\begin{proof} We adapt the idea of the proof of Theorem 2.10 in \cite{hebey} to the present context. All cases are essentially the same so for definiteness let us consider just the case $\alpha_P$. Let $u_k$ be a sequence such that
\[
  \lim_{k\to\infty} \int_M g(\nabla u_k,\nabla u_k)\omega_{M}=\inf_{u\in V_P} \int_M g(\nabla u,\nabla u)\omega_{M}
\]
By Rellich-Kondrakov Theorem there is a subsequence (still denoted by $u_k$) such that $u_k$ converges weakly in $H^1(M)$ and strongly in $L^2(M)$. Strong convergence implies that the limit $\hat u\in V_P$ and the weak convergence that
\[
   \int_M g(\nabla \hat u,\nabla \hat u)\omega_{M}\le  \lim_{k\to\infty} \int_M g(\nabla u_k,\nabla u_k)\omega_{M}
\]
Since $\int_M g(\nabla \hat u,\nabla \hat u)\omega_{M}>0$, $\alpha_P>0$.
\end{proof}

As a consequence we obtain
\begin{theorem} Let $u^\perp$ be a solution of \eqref{ns-perp}. Then
\[
   \|u^\perp\|^2\le C e^{-\mu \alpha_K t}
\]
\label{energia-perp}
\end{theorem}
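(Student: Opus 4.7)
The plan is a straightforward energy estimate on \eqref{ns-perp}: take the $L^2$ inner product of the equation with $u^\perp$ and show that every convective contribution and the pressure term drops out, leaving
\[
  \tfrac{1}{2}\frac{d}{dt}\|u^\perp\|^2 \;=\; \mu\langle L u^\perp,u^\perp\rangle \;=\; -\tfrac{\mu}{2}\int_M g(S_{u^\perp},S_{u^\perp})\,\omega_M,
\]
where the last equality is the second identity in Lemma \ref{var-lemma}. The three convective terms vanish for reasons already recorded earlier in the paper: $\langle\nabla_{u^K}u^\perp,u^\perp\rangle=0$ and $\langle\nabla_{u^\perp}u^\perp,u^\perp\rangle=0$ are immediate from Lemma \ref{b-lemma}, since both $u^K$ and $u^\perp$ are divergence-free; the mixed term $\langle\nabla_{u^\perp}u^K,u^\perp\rangle=0$ is precisely case (ii) of Lemma \ref{d-lemma}, using that $u^K$ is Killing and $\mathsf{div}(u^\perp)=0$. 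The pressure integrates by parts to $-\langle p_\perp,\mathsf{div}(u^\perp)\rangle=0$.

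To close the estimate I need to apply the Poincaré-type inequality of Theorem \ref{poincare-type} to $u^\perp$, which requires orthogonality to every Killing field. This is exactly what Theorem \ref{energia-1} guarantees: writing the full solution as $u=u^K+u^\perp$ with $u^K$ the $L^2$-projection of the initial data onto Killing fields, that theorem preserves the Killing component in time, so $u^\perp$ stays orthogonal to Killing fields for all $t\ge 0$. Rescaling Theorem \ref{poincare-type} by $\|u^\perp\|$ gives
\[
  \alpha_K\|u^\perp\|^2 \;\le\; \int_M g(S_{u^\perp},S_{u^\perp})\,\omega_M,
\]
and plugging this into the energy identity yields the differential inequality $\frac{d}{dt}\|u^\perp\|^2 \le -\mu\alpha_K\|u^\perp\|^2$. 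Grönwall's lemma then delivers $\|u^\perp\|^2 \le \|u^\perp(0)\|^2\,e^{-\mu\alpha_K t}$, i.e.\ the claim with $C=\|u^\perp(0)\|^2$.

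I do not expect any serious obstacle; the work was essentially done in Lemmas \ref{b-lemma}, \ref{d-lemma}, \ref{var-lemma} and Theorem \ref{poincare-type}. The one conceptual point worth underscoring is the persistence in time of the orthogonality of $u^\perp$ to the Killing fields, which is what makes Theorem \ref{poincare-type} applicable and is responsible for the decay rate being $\mu\alpha_K$; without Theorem \ref{energia-1} this step would not go through. Strictly speaking the calculation above is formal, and a fully rigorous version requires enough regularity of $u^\perp$ to justify the integrations by parts and the use of $u^\perp$ itself as a test function; this is standard and can either be absorbed into an a priori smoothness hypothesis on $u$ or obtained by a Galerkin/mollification argument.
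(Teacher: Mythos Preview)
Your proof is correct and follows essentially the same approach as the paper: an energy estimate in which the convective terms drop out via Lemmas \ref{b-lemma} and \ref{d-lemma} (the paper invokes the Killing condition \eqref{killing-ehto} directly for the mixed term, which is pointwise zero, rather than the integrated Lemma \ref{d-lemma}(ii)), and then Lemma \ref{var-lemma} together with the Poincar\'e inequality of Theorem \ref{poincare-type} closes the differential inequality. Your explicit remark that Theorem \ref{energia-1} is what keeps $u^\perp$ in $V_K$ for all time is a helpful clarification that the paper leaves implicit.
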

\begin{proof}   Lemma \ref{b-lemma} and formula  \eqref{killing-ehto} imply that
  \[
   \int_{M}g(\nabla_{u^\perp} u^K,u^\perp)\omega_{M}=
    \int_{M}g(\nabla_{u^K} u^\perp, u^\perp)\omega_{M}=
     \int_{M}g(\nabla_{u^\perp} u^\perp, u^\perp)\omega_{M}=0
  \]
Then combining Theorem \ref{poincare-type} and Lemma \ref{var-lemma} we have
\[
   \frac{d}{dt}\, \|u^\perp\|^2=-\mu \int_M g(S_{u^\perp},S_{u^\perp})\omega_{M}\le
   -\mu \alpha_K \int_M g(u^\perp,u^\perp)\omega_{M}= -\mu \alpha_K  \|u^\perp\|^2
\]
\end{proof}

In particular the solutions of the stationary problem
\begin{align*}
  &\nabla_u u-\mu  L u+\mathsf{grad}( p)=0  \\
  &  \mathsf{div}(u)=0
\end{align*}
are precisely the Killing vector fields.

As a consequence we see that the asymptotic behavior of solutions is totally different for $L$ and $\Delta_H$. For example there are no harmonic vector fields on the sphere so that in the absence of forces all solutions tend to zero if Hodge Laplacian is used. But with the system \eqref{ns} the solutions tend to some Killing field. But the Killing fields on the sphere correspond to the rotating motion which is physically very natural. We think that this is one more argument in favor of $L$ compared to $\Delta_H$, in addition to the discussion in \cite{chczdi}.

At least for numerical purposes it is essential to analyze the elliptic equation satisfied by the pressure.
\begin{lemma}
\[
   -\Delta p-\mathsf{tr}((\nabla u)^2)-\mathsf{Ri}(u,u)+2\, \mu\,\mathsf{div}(\mathsf{Ri}(u))=0
\]
\label{pressure-equation}
\end{lemma}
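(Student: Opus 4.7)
The plan is to apply $\mathsf{div}$ to the momentum equation in \eqref{ns} and simplify term by term, exploiting $\mathsf{div}(u)=0$ and the Ricci identity. The time derivative drops out since $\mathsf{div}(u_t)=\partial_t\mathsf{div}(u)=0$, and the pressure term immediately contributes $\Delta p$. The substance of the argument is therefore to compute $\mathsf{div}(\nabla_u u)$ and $\mathsf{div}(Lu)$ under the incompressibility constraint.

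For the convective term I would write $(\nabla_u u)^k=u^k_{;i}u^i$ and differentiate to obtain
\[
\mathsf{div}(\nabla_u u)=u^k_{;ik}u^i+u^k_{;i}u^i_{;k}.
\]
The second summand is $\mathsf{tr}((\nabla u)^2)$ by definition. For the first, the Ricci identity already used in the proof of Lemma \ref{Lu0} gives $u^k_{;ik}=u^k_{;ki}+\mathsf{Ri}_{i\ell}u^\ell$, and since $u^k_{;k}=\mathsf{div}(u)=0$ is a scalar its covariant derivative vanishes, so $u^k_{;ki}=0$ and the first summand contracts with $u^i$ to give $\mathsf{Ri}(u,u)$.

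For the diffusion term, Lemma \ref{Lu0} together with $\mathsf{div}(u)=0$ reduces $Lu$ to $\Delta_B u+\mathsf{Ri}(u)$, so
\[
\mathsf{div}(Lu)=\mathsf{div}(\Delta_B u)+\mathsf{div}(\mathsf{Ri}(u)).
\]
The key step, and the place where I expect the main obstacle, is to show that $\mathsf{div}(\Delta_B u)=\mathsf{div}(\mathsf{Ri}(u))$ for divergence-free $u$, which then yields $\mathsf{div}(Lu)=2\,\mathsf{div}(\mathsf{Ri}(u))$. My preferred route is via Hodge theory: the relation $\Delta_H u=\Delta_B u-\mathsf{Ri}(u)$ established earlier reduces the claim to $\mathsf{div}(\Delta_H u)=0$, which follows from the standard commutation $\delta\Delta_H=\Delta_H\delta$ on forms, translated to vector fields through the musical isomorphism (so $\mathsf{div}\circ\Delta_H=\Delta_H\circ\mathsf{div}$). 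A purely in-line alternative is to commute the last two indices in $g^{ij}u^k_{;ijk}$ using the Ricci identity for the $(1,1)$-tensor $u^k_{;i}$ and then invoke $u^k_{;ki}=0$ once more; this works but is easy to botch with sign conventions, so the coordinate-free argument seems preferable.

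Collecting the three contributions, the divergence of \eqref{ns} reads $\mathsf{tr}((\nabla u)^2)+\mathsf{Ri}(u,u)-2\mu\,\mathsf{div}(\mathsf{Ri}(u))+\Delta p=0$, which is the asserted identity after multiplying by $-1$.
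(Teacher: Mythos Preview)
Your argument is correct and follows the same overall strategy as the paper: take the divergence of the momentum equation and simplify the convective and diffusive contributions separately using the Ricci identity and incompressibility. Your treatment of $\mathsf{div}(\nabla_u u)$ is identical to the paper's.

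The one substantive difference is in the diffusion term. The paper computes $\mathsf{div}(\Delta_B u)$ by a direct index calculation based on formula \eqref{ricci-twice}, obtaining the general identity
\[
\mathsf{div}(\Delta_B u)=\Delta(\mathsf{div}(u))+\mathsf{div}(\mathsf{Ri}(u)),
\]
and hence $\mathsf{div}(Lu)=2\Delta(\mathsf{div}(u))+2\,\mathsf{div}(\mathsf{Ri}(u))$, \emph{without} assuming $\mathsf{div}(u)=0$. You instead invoke the Weitzenb\"ock relation $\Delta_B=\Delta_H+\mathsf{Ri}$ and the commutation $\delta(\delta d+d\delta)=(\delta d+d\delta)\delta$ (immediate from $\delta^2=0$) to conclude $\mathsf{div}(\Delta_H u)=\Delta(\mathsf{div}(u))=0$ for divergence-free $u$. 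Your route is more conceptual and avoids the index bookkeeping; the paper's route has the advantage of producing the unconstrained formula \eqref{divriu}, which is reused immediately afterwards to deduce that $\mathsf{div}(\mathsf{Ri}(u))=0$ for Killing fields.
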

\begin{proof}
First $\mathsf{div}\big(\Delta_B u\big)=g^{jk}u^i_{;jki}$; then applying the formula \eqref{ricci-twice} we get
\[
  u^i_{;jki}=u^i_{;ijk}+u^{\ell}_{;k}\mathsf{Ri}_{\ell j}
  +u^{\ell}\mathsf{Ri}_{\ell j;k}-u^i_{;\ell}R^{\ell}_{jik}+u^{\ell}_{;j}\mathsf{Ri}_{\ell k}
\]
But $g^{jk}\big(u^{\ell}_{;k}\mathsf{Ri}_{\ell j}-u^{i}_{;\ell}R^{\ell}_{jik} \big)=0$ which implies that
\begin{align*}
   \mathsf{div}\big(\Delta_B u\big)=&g^{jk}u^i_{;ijk}+g^{jk}u^{\ell}\mathsf{Ri}_{\ell j;k}+u^{\ell}_{;j}\mathsf{Ri}^{j}_{\ell}
     =g^{jk}u^i_{;ijk}+u^{\ell}\mathsf{Ri}_{\ell;k}^k+u^{\ell}_{;j}\mathsf{Ri}^{j}_{\ell}\\
     =&\Delta(\mathsf{div} (u))+\mathsf{div}(\mathsf{Ri} (u))
\end{align*}
Consequently
\begin{equation}
     \mathsf{div}( Lu)= 2\Delta (\mathsf{div}(u))+2\, \mathsf{div}(\mathsf{Ri}(u))
\label{divriu}
\end{equation}
Then we compute
\begin{align*}
    \mathsf{div}\big(\nabla_u u\big)=&u^k_{;ik}u^i+u^k_{;i}u^i_{;k}=u^k_{;ki}u^i+\mathsf{Ri}_{ij}u^iu^j+u^k_{;i}u^i_{;k}\\
                                    =&g(\mathsf{grad}(\mathsf{div}(u)), u) +\mathsf{Ri}(u,u)+\mathsf{tr}((\nabla u)^2)
\end{align*}
where we have used the formula \eqref{general-ricci-identity}. Then using the fact that $\mathsf{div}(u)=0$ gives the result.
\end{proof}

Note that the formula \eqref{divriu} implies that  $\mathsf{div}(\mathsf{Ri}(u))=0$  for Killing vector fields.  If Hodge Laplacian is used the same computations give
\[
   -\Delta p-\mathsf{tr}((\nabla u)^2)-\mathsf{Ri}(u,u)=0
\]
Hence there is no term which depends on the diffusion.
So for manifolds where the term $\mathsf{div}(\mathsf{Ri}(u))$ is big, for example manifolds whose curvature changes fast, the pressure given by $L$ and $\Delta_H$ should be considerably different.

\section{Linearized Navier Stokes}
In the numerical solution of Navier Stokes equations one has to deal with the linearized system so let us consider this case also.
\begin{equation}
\begin{aligned}
  & u_t+\nabla_u v+\nabla_v u-\mu  L u+\mathsf{grad}( p)=0  \\
  &  \mathsf{div}(u)=0
\end{aligned}
\label{ns-lin}
\end{equation}
Typically one can think of $v$ as the initial condition, and then one solves the same linear system for a few time steps. Interestingly the solution to the linearized system also preserves its aspect with respect to the space of Killing vector fields.
\begin{theorem}  Let $u$ be a solution of \eqref{ns-lin} and $w$ be Killing. Then
\[
  \frac{d}{dt}\langle u,w\rangle=0
\]
\label{energia-lin}
\end{theorem}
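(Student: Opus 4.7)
The plan is to mimic the computation in the proof of Theorem \ref{energia-1}, treating the two convective terms $\nabla_u v$ and $\nabla_v u$ together via Lemma \ref{c-lemma} rather than Lemma \ref{d-lemma}. Concretely, I would first note that $w$ is time-independent, so
\[
   \frac{d}{dt}\langle u,w\rangle = \langle u_t,w\rangle
\]
and then use \eqref{ns-lin} to substitute for $u_t$, obtaining
\[
  \frac{d}{dt}\langle u,w\rangle = -\langle \nabla_u v,w\rangle-\langle \nabla_v u,w\rangle + \mu\langle Lu,w\rangle - \langle \mathsf{grad}(p),w\rangle.
\]

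Next I would dispatch the three easy terms. For the pressure, integration by parts gives $\langle \mathsf{grad}(p),w\rangle = -\langle p,\mathsf{div}(w)\rangle$, which vanishes because every Killing field is divergence-free (noted right after the definition). For the diffusion term, Lemma \ref{var-lemma} yields $\langle Lu,w\rangle = \langle u,Lw\rangle$, and $Lw = \mathsf{div}(S_w) = 0$ since $w$ is Killing, so this contribution also vanishes.

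The remaining piece is the combined convective term $\langle \nabla_u v,w\rangle + \langle \nabla_v u,w\rangle$, and this is exactly the left-hand side of the first identity in Lemma \ref{c-lemma}, applied with the roles of the vector fields being $u$, $v$ and the Killing field $w$. To invoke that lemma one needs both $\mathsf{div}(u) = 0$ (given by \eqref{ns-lin}) and $\mathsf{div}(v) = 0$; the latter is the natural hypothesis since $v$ is thought of as the initial condition of the nonlinear problem \eqref{ns}, and it is also the condition under which the linearization \eqref{ns-lin} itself is consistent with incompressibility. I would therefore state this divergence-free assumption on $v$ explicitly at the start of the proof (or note that it is implicit in the setup of the previous paragraph).

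The main obstacle, such as it is, is really just identifying that the symmetric combination $\nabla_u v + \nabla_v u$ appearing in \eqref{ns-lin} is precisely the combination that Lemma \ref{c-lemma} was designed to annihilate against a Killing field; once that is recognized the proof collapses to a one-line application of the earlier lemmas, exactly parallel to Theorem \ref{energia-1}. No genuinely new computation is required.
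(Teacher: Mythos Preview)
Your proposal is correct and follows essentially the same route as the paper's own proof: substitute $u_t$ from \eqref{ns-lin}, kill the pressure term via $\mathsf{div}(w)=0$, kill the diffusion term via Lemma \ref{var-lemma} and $Lw=0$, and handle the two convective terms together using the first identity of Lemma \ref{c-lemma}. Your remark that $\mathsf{div}(v)=0$ is needed (and only implicit in the paper) is well taken.
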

\begin{proof} As in Theorem \ref{energia-1} we first obtain
\[
   \frac{d}{dt}\langle u,w\rangle=
   -\langle \nabla_u v,w\rangle  -\langle \nabla_v u,w\rangle+\mu  \langle L  u,w\rangle-\langle \mathsf{grad}(p),w\rangle
\]
Again $\langle L u,w\rangle=\langle u, Lw\rangle=0$ and $\langle \mathsf{grad}(p),w\rangle=-\langle p,\mathsf{div}(w)\rangle=0$ because $w$ is Killing. But then
\[
     \langle \nabla_u v,w\rangle  +\langle \nabla_v u,w\rangle=0
\]
 using Lemma \ref{c-lemma}.
\end{proof}

This result is important because quite often in practise one uses either $\nabla_v u$ or $\nabla_u v$ for the linearized convection term because this is easier to implement. However, in that case the inner product is not preserved so that the computed solution is not qualitatively correct. This strongly suggests that a better solution is obtained when the full linearized convection term is used.

Let us then write as before $u=u^K+u^\perp$ and $v=v^K+v^\perp$. The pressure term can now be decomposed as $p=p_K+p_\perp=g(u,v)+p_\perp$. Let us further denote $f=-\nabla_{u^K} v^\perp-\nabla_{v^\perp} u^K$. Then we can write the linear system for $u^\perp$ as follows:
\begin{equation}
\begin{aligned}
  & u_t^\perp+\nabla_{v^K} u^\perp+\nabla_{u^\perp} v^K+\nabla_{u^\perp} v^\perp+\nabla_{v^\perp} u^\perp-\mu  L u^\perp+\mathsf{grad}(p_\perp)=f   \\
  &  \mathsf{div}(u^\perp)=0
\end{aligned}
\label{ns-lin-perp}
\end{equation}
In this case the norm of the solution does not necessarily diminish because now $f$ acts like a forcing term.
\begin{theorem}  Let $u^\perp$ be a solution of \eqref{ns-lin-perp}. Then
\begin{align*}
   \frac{d}{dt}\|u^\perp\|^2 &\le  -\mu \alpha_K  \|u^\perp\|^2
    +2\langle f,u^\perp\rangle
      -2\langle \nabla_{u^\perp} v^\perp,u^\perp\rangle
\end{align*}
\label{energia-lin-perp}
\end{theorem}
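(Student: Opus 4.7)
The plan is a standard energy estimate: take the $L^2$ inner product of \eqref{ns-lin-perp} with $u^\perp$, eliminate everything that vanishes using the lemmas already established, and control the remaining diffusion term by the Poincar\'e-type inequality.

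First I would note that $\mathsf{div}(v^K)=0$ (every Killing field is divergence-free) so $\mathsf{div}(v^\perp)=\mathsf{div}(v)-\mathsf{div}(v^K)=0$ as well. Moreover, by Theorem \ref{energia-lin}, the Killing component $u^K$ of the solution $u$ of \eqref{ns-lin} is constant in time, so $u^\perp=u-u^K$ remains orthogonal to the space of Killing fields for all $t$. This is the key observation that lets us apply the $\alpha_K$ Poincar\'e inequality to $u^\perp$.

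Then I would compute $\tfrac{d}{dt}\|u^\perp\|^2=2\langle u_t^\perp,u^\perp\rangle$ and pair $u^\perp$ term by term with \eqref{ns-lin-perp}. The terms to dispose of are:
\[
\langle\mathsf{grad}(p_\perp),u^\perp\rangle=-\langle p_\perp,\mathsf{div}(u^\perp)\rangle=0,
\]
\[
\langle\nabla_{v^K}u^\perp,u^\perp\rangle=0\quad\text{and}\quad\langle\nabla_{v^\perp}u^\perp,u^\perp\rangle=0
\]
by Lemma \ref{b-lemma} (since $\mathsf{div}(v^K)=\mathsf{div}(v^\perp)=0$), and
\[
\langle\nabla_{u^\perp}v^K,u^\perp\rangle=0
\]
by Lemma \ref{d-lemma} (case where $v^K$ is Killing and $\mathsf{div}(u^\perp)=0$). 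The term $\langle\nabla_{u^\perp}v^\perp,u^\perp\rangle$ has no reason to vanish and simply stays on the right-hand side.

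Finally the diffusion term: by Lemma \ref{var-lemma},
\[
\mu\,\langle Lu^\perp,u^\perp\rangle=-\tfrac{\mu}{2}\int_M g(S_{u^\perp},S_{u^\perp})\,\omega_M,
\]
and since $u^\perp$ is orthogonal to all Killing fields, the Poincar\'e-type inequality of Theorem \ref{poincare-type} (scaled away from unit norm by homogeneity) gives
\[
\int_M g(S_{u^\perp},S_{u^\perp})\,\omega_M\ge\alpha_K\|u^\perp\|^2,
\]
so $2\mu\langle Lu^\perp,u^\perp\rangle\le-\mu\alpha_K\|u^\perp\|^2$. Collecting everything yields the stated inequality. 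The only non-routine point is the opening remark that the decomposition $u=u^K+u^\perp$ is stable in time for the linearized flow (so $u^\perp\in V_K$ pointwise in $t$); once that is in place, the rest is assembly of existing lemmas.
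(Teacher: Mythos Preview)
Your proof is correct and follows essentially the same route as the paper: pair the equation with $u^\perp$, kill the pressure and the three convective terms via Lemma~\ref{b-lemma} and the Killing property of $v^K$, and control $\mu\langle Lu^\perp,u^\perp\rangle$ by Lemma~\ref{var-lemma} together with Theorem~\ref{poincare-type}. Your explicit remarks that $\mathsf{div}(v^\perp)=0$ and that $u^\perp$ stays in $V_K$ (so the Poincar\'e inequality applies) are good to spell out; the paper leaves them implicit.
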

\begin{proof} First we have
\begin{align*}
  \tfrac{1}{2}\, \frac{d}{dt}\langle u^\perp,u^\perp\rangle=&
   \langle f,u^\perp\rangle
      -\langle \nabla_{v^K} u^\perp,u^\perp\rangle
   -\langle \nabla_{u^\perp} v^K,u^\perp\rangle\\
    &  -\langle \nabla_{u^\perp} v^\perp,u^\perp\rangle
   -\langle \nabla_{v^\perp} u^\perp,u^\perp\rangle
   +\mu  \langle L  u^\perp,u^\perp\rangle
   -\langle \mathsf{grad}(p),u^\perp\rangle
\end{align*}
First $\langle \mathsf{grad}(p),u^\perp\rangle=-\langle p ,\mathsf{div}(u^\perp)\rangle=0$ as usual. Now $g(\nabla_{u^\perp} v^K,u^\perp)=0$ because $v^K$ is Killing and $\langle \nabla_{v^K} u^\perp,u^\perp\rangle=\langle \nabla_{v^\perp} u^\perp,u^\perp\rangle=0$ by
Lemma \ref{b-lemma}. Hence
\[
    \tfrac{1}{2}\, \frac{d}{dt}\langle u^\perp,u^\perp\rangle=
   \langle f,u^\perp\rangle
      -\langle \nabla_{u^\perp} v^\perp,u^\perp\rangle
   +\mu  \langle L  u^\perp,u^\perp\rangle
\]
Then combining Theorem \ref{poincare-type} and Lemma \ref{var-lemma} we get the result.
\end{proof}

Hence in this case the norm might grow for some choices of $v$.
Note, however, that in the intended application the norm actually diminishes for small time. Typically $v$ is considered as the initial condition for $u$ so that $u(0)=u^K+u^\perp(0)=v^K+v^\perp$. Hence if we write
\[
    \beta(t)=\langle f,u^\perp\rangle-\langle \nabla_{u^\perp} v^\perp,u^\perp\rangle
\]
then clearly $\beta(0)=0$. But this stability for small time is sufficient in practice because one only takes a few time steps with same $v$.

\section{New solutions from old}
One interesting property of the linearized problem is that one can produce new solutions with the bracket. In the proof of the result we  need the following property of Killing vector fields \cite{petersen}.
If $v$ is a Killing vector field then
\[
    \nabla_{X,Y}^2v=R(Y,v)X
\]
which can be written with indices as
\begin{equation}
      v^{i}_{;hk}=-v^{\ell}R^{i}_{\ell kh}
\label{kill-curvature}
\end{equation}
\begin{theorem}  Let $(u,p)$ be a solution of \eqref{ns-lin} where we suppose that $v$ is Killing. Then
\[
  \big(\hat u,\hat p\big)=\Big([u,v], -g\big(\mathsf{grad}(p),v\big)\Big)
\]
is also a solution of \eqref{ns-lin}.
\label{sol-lin}
\end{theorem}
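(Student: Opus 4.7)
The plan is to recognize $\hat u = [u,v] = -\mathcal{L}_v u$, where $\mathcal{L}_v$ is the Lie derivative, and to exploit the fact that the flow of a Killing field is by isometries. Consequently $\mathcal{L}_v$ commutes with every operator appearing in \eqref{ns-lin} that is built from $g$ and the Levi-Civita connection. I would apply $-\mathcal{L}_v$ to the whole $u$-equation and identify the result term by term as the linearized equation for $\hat u$ with pressure $\hat p = -v(p) = -g(\mathsf{grad}(p),v)$.

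Several pieces are routine. Since a Killing field preserves $\omega_M$ one has $[\mathcal{L}_v,\mathsf{div}]=0$, so $\mathsf{div}(\hat u)=-\mathcal{L}_v\mathsf{div}(u)=0$. The time derivative commutes with $\mathcal{L}_v$ because $v$ is time-independent. For the pressure, $-\mathcal{L}_v\mathsf{grad}(p)=-\mathsf{grad}(v(p))=\mathsf{grad}(\hat p)$. For the diffusion, $L$ is built from $g$ and $\nabla$, hence $[\mathcal{L}_v,L]=0$ and $-\mathcal{L}_v(Lu)=L\hat u$.

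For the convective terms I would use the identity $\mathcal{L}_v(\nabla_X Y)=\nabla_{[v,X]}Y+\nabla_X[v,Y]$, valid for Killing $v$ (equivalent to $\mathcal{L}_v\nabla=0$). Combined with $[v,v]=0$ this yields $-\mathcal{L}_v(\nabla_u v)=\nabla_{\hat u}v$ and $-\mathcal{L}_v(\nabla_v u)=\nabla_v\hat u$. Adding all five contributions reproduces \eqref{ns-lin} for $(\hat u,\hat p)$.

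The hard part is establishing the commutations $[\mathcal{L}_v,\nabla]=0$ and $[\mathcal{L}_v,L]=0$ when $v$ is Killing. In the index-based style of the paper both reduce to identities involving $v^i_{;jk}$, which \eqref{kill-curvature} rewrites in terms of the Riemann tensor; the Ricci identities \eqref{general-ricci-identity}, \eqref{ricci-twice} together with the symmetries of $R$ then collapse the curvature contributions to zero. This is the ``technical'' content advertised in the introduction; once these commutation lemmas are secured, reconstructing the equation for $\hat u$ is an automatic term-by-term check.
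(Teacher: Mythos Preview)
Your proposal is correct and follows essentially the same strategy as the paper: apply $-\mathcal{L}_v=[\,\cdot\,,v]$ to the equation and identify each term with its counterpart in the equation for $\hat u$. The difference is only in presentation. You invoke the conceptual principle that the flow of a Killing field is by isometries, so $\mathcal{L}_v$ commutes with every operator built from $g$ and $\nabla$; the paper instead writes each commutation ($[\mathsf{grad}(p),v]=\mathsf{grad}(\hat p)$, $\nabla_v[u,v]=[\nabla_v u,v]$, $\nabla_{[u,v]}v=[\nabla_u v,v]$, $\Delta_B[u,v]=[\Delta_B u,v]$, $\mathsf{Ri}[u,v]=[\mathsf{Ri}(u),v]$) as a separate claim and verifies it by explicit index computations using \eqref{kill-curvature}, the Ricci identities \eqref{general-ricci-identity}, \eqref{ricci-twice}, and the second Bianchi identity. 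In particular the paper splits $L=\Delta_B+\mathsf{Ri}$ (on divergence-free fields) and treats the two pieces individually, whereas you handle $L$ in one stroke via $[\mathcal{L}_v,L]=0$. Your packaging is cleaner and explains \emph{why} the identities hold; the paper's version shows \emph{how} they hold at the level of coordinates, which is exactly the ``technical content'' you correctly anticipate having to supply for $[\mathcal{L}_v,\nabla]=0$ and $[\mathcal{L}_v,L]=0$.
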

\begin{proof} First it is straightforward to check that $\mathsf{div}(\hat u)=\mathsf{div}([u,v])=0$.  For $\hat u$ we have the following equation:
\[
    \hat u_t+[\nabla_v u,v]+[\nabla_u v,v]-[\Delta_B u,v]-[\mathsf{Ri}(u),v]+[\mathsf{grad}(p),v]=0
\]
We have to verify the following claims.

\noindent\textbf{Claim 1}. $[\mathsf{grad}(p),v]=\mathsf{grad}(\hat p)$.
\begin{align*}
   [\mathsf{grad}(p),v]&=\nabla_{\mathsf{grad}(p)}v-\nabla_{v}(\mathsf{grad}(p))   \\
   &=   g^{k\ell}p_{;k}v^{i}_{;\ell}-g^{i\ell }p_{;k\ell}v^k=-g^{i\ell}p_{;k}v^k_{;\ell}-g^{i\ell }p_{;k\ell}v^k \\
   &=-g^{i\ell}\big( p_{;k}v^k\big)_{;\ell}
   =-\mathsf{grad}\big(g\big(\mathsf{grad}(p),v\big)\big)=\mathsf{grad}(\hat p)
\end{align*}
\noindent\textbf{Claim 2}.  $\nabla_v[u,v]=[\nabla_v u,v]$.

\noindent We have
\begin{align*}
   \nabla_{v}[u,v]&=\nabla_{v}\nabla_{u}v-\nabla_{v}\nabla_{v}u   \\
   &=\nabla_{(\nabla_{v}u)}v+v^{k}u^{h}v^{i}_{;hk}
   -\nabla_{v}(\nabla_{v}u)   \\
   &=[\nabla_{v}u,v] +v^{k}u^{h}v^{i}_{;hk}
\end{align*}
However, formula \eqref{kill-curvature} implies that
\[
   v^{k}u^{h}v^{i}_{;hk}=\nabla_{u,v}^2 v =R(v,v)u=0
\]

\noindent\textbf{Claim 3}.  $ \nabla_{[u,v]}v=[\nabla_u v,v]$.

\noindent Using the previous claim we compute
\begin{align*}
      \nabla_{[u,v]}v=& \nabla_v [u,v]+[ [u,v],v]=[\nabla_v u,v]+[ [u,v],v]  \\
                    =& [\nabla_u v+[v,u],v]+[ [u,v],v]=[\nabla_u v,v]
\end{align*}

\noindent\textbf{Claim 3}.  $ \Delta_{B}[u,v]=[\Delta_{B}u,v]$.

\noindent In coordinates  we have
\begin{align*}
  [\Delta_{B}u,v]=&g^{hk}\Big(u^{i}_{;hk}v^{\ell}_{;i}-v^{i}u^{\ell}_{;hki}\Big)  \\
    \Delta_{B}[u,v]=&g^{hk}\Big(u^{i}_{;hk}v^{\ell}_{;i}+2u^{i}_{;h}v^{\ell}_{;ik}+u^{i}v^{\ell}_{;ihk}
    -v^{i}_{;hk}u^{\ell}_{;i}-2v^{i}_{;h}u^{\ell}_{;ik}-v^{i}u^{\ell}_{;ihk}\Big)
\end{align*}
Let $T=\Delta_{B}[u,v]- [\Delta_{B}u,v]$; we will show that $T=0$. Using the formula \eqref{ricci-twice} we obtain
\begin{align*}
    T=g^{hk}\Big(2u^{i}_{;h}v^{\ell}_{;ik}+u^{i}v^{\ell}_{;ihk}
    -v^{i}_{;hk}u^{\ell}_{;i}-2v^{i}_{;h}u^{\ell}_{;ik}
    -v^i u^{\ell}_{;m}R^{m}_{ikh}+2v^iu^{j}_{;h}R^{\ell}_{ikj}+v^iu^{j}R^{\ell}_{ihj;k}  \Big)
\end{align*}
Since $\Delta_B v+\mathsf{Ri}(v)=0$ we have
\[
   g^{hk}v^{i}_{;hk}u^{\ell}_{;i}+ g^{hk}v^i u^{\ell}_{;m}R^{m}_{ikh}=
   g^{hk}v^{i}_{;hk}u^{\ell}_{;i}+ v^i u^{\ell}_{;m}\mathsf{Ri}^{m}_{i}=
   \nabla_{\Delta_B v}u+\nabla_{\mathsf{Ri}(v)}u=0
\]
Using this and formula \eqref{kill-curvature} thus yields
\[
    T=-g^{hk}\Big(u^i v^{m}_{;k}R^{\ell}_{mhi}+2v^{i}_{;h}u^{\ell}_{;ik} \Big)
\]
Then we can write
\[
    2v^{i}_{;h}u^{\ell}_{;ik}=v^{i}_{;h}u^{\ell}_{;ik}+v^i_{;h}u^{\ell}_{;ki}-v^i_{;h}u^{j}R^{\ell}_{ikj}
\]
so finally using the Killing property
\[
   T=-g^{hk}\Big(v^i_{;h}u^{\ell}_{;ki} +v^{i}_{;h}u^{\ell}_{;ik} \Big)=
    g^{ih}  v^k_{;h}u^{\ell}_{;ik}-g^{hk}v^{i}_{;h}u^{\ell}_{;ki} =0
\]

\noindent\textbf{Claim 4}.  $ \mathsf{Ri}[u,v]=[\mathsf{Ri}(u),v]$.

\noindent First we compute
\begin{align*}
  [\mathsf{Ri}(u),v]
  &=\mathsf{Ri}^{j}_{h}u^{h}v^{i}_{;j}-\mathsf{Ri}^{i}_{h;j}u^{h}v^{j}-\mathsf{Ri}^{i}_{h}u^{h}_{;j}v^{j} \\
  &=\mathsf{Ri}[u,v]+\mathsf{Ri}^{j}_{h}u^{h}v^{i}_{;j}-\mathsf{Ri}^{i}_{h;j}u^{h}v^{j}-\mathsf{Ri}^{i}_{h}u^{j}v^{h}_{;j}
\end{align*}
Hence we need to prove that
\begin{equation}
     T=\mathsf{Ri}^{j}_{h}v^{i}_{;j}-\mathsf{Ri}^{i}_{h;j}v^{j}-\mathsf{Ri}^{i}_{j}v^{j}_{;h}=0
\label{Ri-formula}
\end{equation}
Now formula \eqref{kill-curvature} implies that
\[
     v^{j}_{;\ell jh}=v^{j}_{;h}\mathsf{Ri}_{j \ell}+v^{j}\mathsf{Ri}_{j \ell;h}
\]
Applying this to the  formula  \eqref{Ri-formula} we get
\[
     T=\mathsf{Ri}^{j}_{h}v^{i}_{;j}-\mathsf{Ri}^{i}_{h;j}v^{j}+\mathsf{Ri}^{i}_{j;h}v^{j}-g^{i\ell} v^{j}_{;\ell jh}
\]
Using  Ricci identity  \eqref{general-ricci-identity}  and the fact that $v$ is Killing we get
\begin{align*}
    g^{i\ell} v^{j}_{;\ell jh}= & g^{i\ell}v^{j}_{;\ell hj}+ \mathsf{Ri}_{h}^\ell v^i_{;\ell}+ g^{i\ell}v^{j}_{;k}R^{k}_{jh\ell} \\
    =& - g^{i\ell}v^{k}_{;j} R^{j}_{kh\ell} - g^{i\ell}v^{k} R^{j}_{kh\ell;j} + \mathsf{Ri}_{h}^\ell v^i_{;\ell}+ g^{i\ell}v^{j}_{;k}R^{k}_{jh\ell} \\
    =& - g^{i\ell}v^{k} R^{j}_{kh\ell;j} + \mathsf{Ri}_{h}^\ell v^i_{;\ell}
\end{align*}
Then by Bianchi's second  identity \eqref{bianchi1}
\[
    T=\mathsf{Ri}^{i}_{j;h}v^{j}-\mathsf{Ri}^{i}_{h;j}v^{j}+ g^{i\ell}v^{k} R^{j}_{kh\ell;j}=
    \mathsf{Ri}^{i}_{j;h}v^{j}-\mathsf{Ri}^{i}_{h;j}v^{j}+
    v^{k} \big(\mathsf{Ri}^{i}_{h;k}-\mathsf{Ri}^{i}_{k;h}\big)=0
\]
\end{proof}

\section{2 dimensional case}
Since one of the main motivations for studying flows on manifolds comes from atmospheric models it is interesting to see this case in more detail. Moreover one has to take into account the Coriolis effect. Let us start, however,  with the arbitrary 2 dimensional manifold. The main simplification comes from the fact that in this case $\mathsf{Ri}=\kappa\, g$ where $\kappa$ is the Gaussian curvature. So the system can be written as follows
\begin{equation}
\begin{aligned}
  & u_t+\nabla_u u-\mu\Delta_B u-\mu \kappa\, u+\mathsf{grad}( p)=0   \\
  & -\Delta p-\mathsf{tr}((\nabla u)^2)-\kappa\, g(u,u)+2\, \mu\,g(\mathsf{grad}(\kappa), u)=0   \\
  &  \mathsf{div}(u)=0
\end{aligned}
\label{ns-2d}
\end{equation}
The Killing vector fields now satisfy the condition $g(\mathsf{grad}(\kappa), u)=0$; i.e. orbits defined by $u$ are on the level sets of the curvature. This makes intuitively clear the classical result about existence of Killing fields. Namely locally on 2 dimensional manifolds the space of Killing fields is either three, one or zero dimensional. If $\kappa$ is constant then we have the three dimensional case. If not the only solution candidates are vector fields which satisfy $g(\mathsf{grad}(\kappa), u)=0$. However, this is only necessary condition so depending on $\kappa$ the space can be zero or one dimensional. Note that globally the space of Killing fields can be two dimensional as the flat torus shows.

Since the vorticity is important in most of the fluid problems let us examine how it is in our context.  Recall that  the vorticity is  $\zeta=\mathsf{rot}(u)$ and using the formulas in \ref{curl-ja-muut} we can write
\[
    \zeta=\mathsf{rot}(u)=\mathsf{div}(Ku)=\varepsilon^{i}_{\ell}u^{\ell}_{;i}
\]
\begin{theorem} If $u$ is the solution of \eqref{ns-2d}  then
\begin{equation}
    \zeta_t-\mu\Delta \zeta+g(\mathsf{grad}(\zeta),u)-2\mu\,g(\mathsf{grad}(\kappa),Ku) -2\mu\,\kappa\zeta= 0
\label{vorti}
\end{equation}
\end{theorem}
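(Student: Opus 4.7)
My plan is to apply the scalar operator $\mathsf{rot}$ to the momentum equation of \eqref{ns-2d} and identify each resulting term, using that $\mathsf{rot}\circ\mathsf{grad}=0$, that $\partial_t$ commutes with $\mathsf{rot}$, and that the volume form $\varepsilon$ is covariantly constant. The pressure drops out immediately, and it remains to compute $\mathsf{rot}(\nabla_u u)$, $\mathsf{rot}(\Delta_B u)$, and $\mathsf{rot}(\kappa u)$.

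For the convection term, write $\mathsf{rot}(\nabla_u u)=\varepsilon^{j}_{k}(u^{k}_{;i}u^i)_{;j}=\varepsilon^{j}_{k}u^{k}_{;ij}u^i+\varepsilon^{j}_{k}u^{k}_{;i}u^{i}_{;j}$. For the first piece, swap the last two covariant derivatives by the Ricci identity \eqref{general-ricci-identity}: since $\varepsilon^{j}_{k}u^{k}_{;ji}u^i=(\varepsilon^{j}_{k}u^{k}_{;j})_{;i}u^i=\zeta_{;i}u^i=g(\mathsf{grad}(\zeta),u)$, we obtain this plus a Riemann-tensor remainder which, in the 2D form $R^{k}{}_{\ell ij}=\kappa(\delta^{k}_{i}g_{\ell j}-\delta^{k}_{j}g_{\ell i})$, reduces to $-\kappa\,\varepsilon^{j}_{i}u_{j}u^{i}=0$ by antisymmetry of $\varepsilon$. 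For the second piece, $\varepsilon^{j}_{k}u^{k}_{;i}u^{i}_{;j}=\mathsf{tr}(\varepsilon(\nabla u)^2)$; by Cayley--Hamilton in 2D, $(\nabla u)^2=\mathsf{tr}(\nabla u)\,\nabla u-\det(\nabla u)\,I$, and since $\mathsf{tr}(\varepsilon)=0$ and $\mathsf{tr}(\nabla u)=\mathsf{div}(u)=0$, this vanishes. Hence $\mathsf{rot}(\nabla_u u)=g(\mathsf{grad}(\zeta),u)$.

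The zeroth-order curvature term is trivial by Leibniz: using $\nabla\varepsilon=0$,
\[
\mathsf{rot}(\kappa u)=\varepsilon^{j}_{k}(\kappa u^{k})_{;j}=\kappa\,\zeta+\varepsilon^{j}_{k}\kappa_{;j}u^{k}=\kappa\,\zeta+g(\mathsf{grad}(\kappa),Ku).
\]
For the diffusion term, $\mathsf{rot}(\Delta_B u)=g^{ab}\varepsilon^{j}_{k}u^{k}_{;abj}$ and $\Delta\zeta=g^{ab}\varepsilon^{j}_{k}u^{k}_{;jab}$, so it suffices to commute three covariant derivatives. Moving $j$ past $a$ and $b$ using the Ricci identity on a $(1,1)$ and then a $(1,0)$ tensor produces three Riemann contributions and one derivative-of-Riemann contribution. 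Substituting the 2D form of $R$, using the second Bianchi identity \eqref{bianchi1} to dispose of $\mathsf{grad}(\kappa)$ terms coming from $R_{;}$, and exploiting the antisymmetry of $\varepsilon$, the total collapses to
\[
\mathsf{rot}(\Delta_B u)=\Delta\zeta+\kappa\,\zeta+g(\mathsf{grad}(\kappa),Ku).
\]

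Assembling the pieces, the diffusion part contributes
\[
-\mu\,\mathsf{rot}(\Delta_B u+\kappa u)=-\mu\Delta\zeta-2\mu\kappa\,\zeta-2\mu\,g(\mathsf{grad}(\kappa),Ku),
\]
and combining with $\zeta_t$ and $g(\mathsf{grad}(\zeta),u)$ from the convection term gives precisely \eqref{vorti}. The main obstacle is the bookkeeping in the commutation $\mathsf{rot}\circ\Delta_B=\Delta\circ\mathsf{rot}+(\text{curvature})$: the three successive applications of the Ricci identity must be arranged so that in the 2D case every contraction either vanishes by antisymmetry of $\varepsilon$ or collapses via $R^{k}{}_{\ell ij}=\kappa(\delta^{k}_{i}g_{\ell j}-\delta^{k}_{j}g_{\ell i})$ to exactly $\kappa\zeta+g(\mathsf{grad}(\kappa),Ku)$, with no leftover $\nabla\kappa$ or $\nabla^2 u$ terms.
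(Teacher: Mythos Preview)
Your proposal is correct and follows essentially the same route as the paper: apply $\mathsf{rot}$ term by term, kill the pressure via $\mathsf{rot}\circ\mathsf{grad}=0$, compute $\mathsf{rot}(\kappa u)$ by Leibniz, and reduce the convection and Bochner pieces by commuting covariant derivatives with the Ricci identity and then inserting the 2D form of the curvature tensor. The only cosmetic differences are that the paper disposes of $\varepsilon^{j}_{k}u^{k}_{;i}u^{i}_{;j}$ by the direct identity $\varepsilon^{i}_{\ell}u^{\ell}_{;j}u^{j}_{;i}=\zeta\,\mathsf{div}(u)$ rather than Cayley--Hamilton, and for the $\mathsf{rot}(\Delta_B u)-\Delta\zeta$ commutation the paper simply invokes the precomputed formula \eqref{ricci-twice} together with \eqref{curva2d}; in two dimensions the derivative-of-Riemann term becomes $\kappa_{;j}$ times a metric combination directly, so the second Bianchi identity is not actually needed there.
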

\begin{proof}
In 2 dimensional case
\[
   Lu=\Delta_B u+\mathsf{grad}\big(\mathsf{div}(u)\big)+\kappa\,u
\]
and by the definition of $\mathsf{rot}$ it follows that $\mathsf{rot}\circ \mathsf{grad}=0$.
Now
\[
\mathsf{rot}\big(\kappa\,u\big)=\mathsf{div}\big(\kappa\,Ku\big)=
\kappa\,\zeta +g\big(\mathsf{grad}(\kappa),Ku\big)
\]
Then we compute
\begin{align*}
   \mathsf{rot}\big(\Delta_B u\big)=&g^{ij}\varepsilon^{\ell}_k u^k_{;ij\ell }  \\
     \Delta \zeta=&\Delta\big(\varepsilon^\ell_k u^k_{;\ell}\big)
      =g^{ij}\varepsilon^{\ell}_k u^k_{;\ell ij}
\end{align*}
Now using the formulas \eqref{curva2d} and \eqref{ricci-twice} we obtain
\begin{align*}
  \Delta \zeta=& g^{ij}\varepsilon^{\ell}_k u^k_{;\ell ij}=g^{ij}\varepsilon^{\ell}_k u^k_{;ij\ell}
    -\kappa\,   u^{h}_{;i}\varepsilon^i_h-  \kappa_{;i}  u^{h}\varepsilon^i_h   \\
             =& \mathsf{rot}\big(\Delta_B u\big)-\kappa\zeta
    -g\big(Ku,\mathsf{grad}(\kappa)\big)
\end{align*}
Then using the Ricci identity and the formula \eqref{curva2d} we get
\begin{align*}
    \mathsf{rot}(\nabla_{u} u)&=\varepsilon^i_{\ell}u^{j}_{;i}u^{\ell}_{;j}+\varepsilon^i_{\ell}u^{j}u^{\ell}_{;ji} \\
    &=\varepsilon^i_{\ell}u^{j}_{;i}u^{\ell}_{;j}+\varepsilon^i_{\ell}u^{j}(u^{\ell}_{;ij}-u^{h}R^{\ell}_{jih}) \\
    &=\varepsilon^i_{\ell}u^{j}_{;i}u^{\ell}_{;j}+\zeta_{;j}u^{j}-
   \kappa\, \varepsilon_{jh}u^{j}u^{h}
\end{align*}
But $\varepsilon_{jh}u^ju^h=0$ and a direct computation shows that $\varepsilon^i_{\ell}u^{\ell}_{;j}u^{j}_{;i}=\varepsilon^i_{\ell}u^{\ell}_{;i}u^{j}_{;j}=\zeta\,\mathsf{div}(u)$.
\end{proof}

Since the sphere is an important special case let us analyze this case more closely.  Let us first recall the following result:
\begin{itemize}
\item[] let $f$ be a function on $M$ such that $\int_Mf\omega_{M}=0$; then
\begin{equation}
   \lambda_1\int_M f^2\omega_{M}\le \int_M g(\mathsf{grad}(f),\mathsf{grad}(f))\omega_{M}
\label{omi}
\end{equation}
where $\lambda_1$ is the first positive eigenvalue of $-\Delta$. More briefly we can write $\lambda_1\|f\|^2\le \|\mathsf{grad}(f)\|^2$.
\end{itemize}
This gives
\begin{theorem} Let $\zeta$ be the solution of \eqref{vorti} on the sphere; then
\[
   \tfrac{d}{dt} \|\zeta\|^2\le0
\]
\label{vorti-teo}
\end{theorem}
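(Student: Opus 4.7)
The plan is to take the inner product of the vorticity equation \eqref{vorti} with $\zeta$ and integrate, exploiting the fact that the sphere has constant Gaussian curvature so that $\mathsf{grad}(\kappa)=0$ and the awkward coupling term in \eqref{vorti} vanishes. Thus on the sphere the equation reduces to
\[
   \zeta_t-\mu\Delta\zeta+g(\mathsf{grad}(\zeta),u)-2\mu\kappa\,\zeta=0.
\]

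Next I would multiply by $\zeta$ and integrate over $M$. The diffusion term yields $\langle \Delta\zeta,\zeta\rangle=-\|\mathsf{grad}(\zeta)\|^2$ after the scalar divergence theorem. For the convection term I would use that $\mathsf{div}(u)=0$ together with the identity $\mathsf{div}(\tfrac12\zeta^2 u)=\zeta\,g(\mathsf{grad}(\zeta),u)$, so that $\langle g(\mathsf{grad}(\zeta),u),\zeta\rangle=0$. This gives
\[
   \tfrac{1}{2}\,\tfrac{d}{dt}\|\zeta\|^2=-\mu\|\mathsf{grad}(\zeta)\|^2+2\mu\kappa\|\zeta\|^2,
\]
so the claim reduces to the sharp Poincar\'e-type estimate $2\kappa\|\zeta\|^2\le \|\mathsf{grad}(\zeta)\|^2$.

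To invoke \eqref{omi} I first need $\int_M\zeta\,\omega_M=0$; this is immediate because $\zeta=\mathsf{div}(Ku)$ and $M$ has no boundary. Then I apply \eqref{omi} with the explicit value of the first positive eigenvalue of $-\Delta$ on a round sphere of Gaussian curvature $\kappa$, namely $\lambda_1=2\kappa$ (the unit sphere has $\lambda_1=2$, and the eigenvalues scale like $\kappa$). This yields exactly $2\kappa\|\zeta\|^2\le \|\mathsf{grad}(\zeta)\|^2$, finishing the proof.

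The main obstacle, and the step that is genuinely special to the sphere, is the coincidence that the first eigenvalue $\lambda_1$ of $-\Delta$ equals $2\kappa$, so that the diffusion and Ricci-type contributions balance exactly. The convection and divergence calculations are routine, and the vanishing of $\mathsf{grad}(\kappa)$ is what eliminates the only term in \eqref{vorti} that does not manifestly have a good sign; without constant curvature the argument breaks down, which is why the result is stated specifically for the sphere.
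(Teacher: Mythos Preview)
Your argument is correct and essentially identical to the paper's proof: both multiply \eqref{vorti} by $\zeta$, kill the convection term via $\mathsf{div}(\tfrac12\zeta^2 u)$ and $\mathsf{div}(u)=0$, and then close with the Poincar\'e inequality \eqref{omi} using $\int_M\zeta\,\omega_M=0$ and $\lambda_1=2\kappa$ on the sphere. If anything, you are slightly more explicit in noting that the $g(\mathsf{grad}(\kappa),Ku)$ term drops because $\kappa$ is constant, which the paper leaves implicit.
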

\begin{proof} First
\[
   \tfrac{1}{2}  \tfrac{d}{dt} \|\zeta\|^2=\mu\int_M\zeta \Delta \zeta\omega_{M}-
   \int_M g(\mathsf{grad}(\zeta),u)\zeta\omega_{M}+
      2\mu\int_M \kappa\zeta^2\omega_{M}
\]
The formula
\[
 \mathsf{div}\big(\tfrac{1}{2}\zeta^2 u\big)=\tfrac{1}{2}\zeta^2\mathsf{div}(u)
 +g(\mathsf{grad}(\zeta),u)\zeta
\]
implies that $ \int_M g(\mathsf{grad}(\zeta),u)\zeta\omega_{M}=0$. The result then follows form the inequality \eqref{omi} because $ \int_M \zeta\omega_{M}=0$ and on the sphere $\lambda_1=2\kappa$.
\end{proof}

Let us again use
 the decomposition $u=u^K+u^\perp$ for the solution of \eqref{ns} and let  $\zeta=\zeta^K+\zeta^\perp$ be the corresponding decomposition for the vorticity. Note that for all 2 dimensional manifolds we have
 \[
   \mathsf{Rot}(\mathsf{rot}(u^K))=
          2\kappa u^K
 \]
 Multiplying by $K$ and taking the inner product with $u^K$ gives
 \[
        g(\mathsf{grad}(\zeta^K),u^K)=-2\kappa g(Ku^K,u^K)=0
 \]
 On the other hand applying the operator $\mathsf{rot}$ implies that on the sphere
 \[
     -\Delta \zeta^K=2\,\kappa\zeta^K
\]
Hence  the functions $\zeta^K$ are actually the first spherical harmonics, i.e. the eigenfunctions of $-\Delta$ corresponding to the smallest positive eigenvalue.

Interestingly the orthogonality  of $u^K$ and $u^\perp$ ''descends'' to the orthogonality of vorticities.
\begin{lemma} With the above notations on the sphere we have $\langle \zeta^K,\zeta^\perp\rangle=0$.
\end{lemma}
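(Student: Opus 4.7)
The plan is to reduce the claim to the known orthogonality $\langle u^K,u^\perp\rangle=0$ by moving the $\mathsf{rot}$ operator from $u^\perp$ onto $\zeta^K$, and then exploiting the identity $\mathsf{Rot}(\mathsf{rot}(u^K))=2\kappa u^K$ recorded just above the lemma.

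Concretely, I would start from
$$\langle \zeta^K,\zeta^\perp\rangle=\int_M \zeta^K\,\mathsf{rot}(u^\perp)\,\omega_M=\int_M \zeta^K\,\mathsf{div}(Ku^\perp)\,\omega_M.$$
Since $M$ is compact without boundary, the divergence theorem gives $\int_M \zeta^K\,\mathsf{div}(Ku^\perp)\,\omega_M=-\int_M g(\mathsf{grad}(\zeta^K),Ku^\perp)\,\omega_M$. Pointwise $K$ is a rotation by $\pi/2$, hence skew-adjoint with respect to $g$, and one further manipulation yields $\int_M g(K\,\mathsf{grad}(\zeta^K),u^\perp)\,\omega_M=\langle\mathsf{Rot}(\zeta^K),u^\perp\rangle$. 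In short, $\mathsf{rot}$ and $\mathsf{Rot}$ are $L^2$-adjoints on the compact boundaryless surface, so $\langle\zeta^K,\zeta^\perp\rangle=\langle\mathsf{Rot}(\zeta^K),u^\perp\rangle$.

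Next I would substitute $\zeta^K=\mathsf{rot}(u^K)$ and apply the 2D identity $\mathsf{Rot}(\mathsf{rot}(u^K))=2\kappa u^K$. On the sphere $\kappa$ is a positive constant, hence $\mathsf{Rot}(\zeta^K)=2\kappa u^K$ is itself a Killing field, and
$$\langle \zeta^K,\zeta^\perp\rangle=2\kappa\,\langle u^K,u^\perp\rangle=0$$
by the very definition of the orthogonal part $u^\perp$.

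The main (and only) delicate point is verifying that $\mathsf{rot}$ and $\mathsf{Rot}$ are adjoint with the correct sign under the conventions of the appendix \ref{curl-ja-muut}; once that is in hand, the proof is a two-line calculation. It is worth emphasizing that constancy of $\kappa$ is essential: on a generic 2D surface the field $2\kappa u^K$ is no longer Killing, because multiplication by a non-constant function destroys equation \eqref{killing-ehto}, so the descent of orthogonality from $u^K\perp u^\perp$ to $\zeta^K\perp\zeta^\perp$ is truly a feature of constant-curvature surfaces.
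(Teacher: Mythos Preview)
Your argument is essentially the paper's own proof: write $\zeta^\perp=\mathsf{div}(Ku^\perp)$, integrate by parts to put the derivative on $\zeta^K$, move $K$ across using its skew-adjointness, and then invoke $\mathsf{Rot}(\mathsf{rot}(u^K))=2\kappa u^K$ together with $\langle u^K,u^\perp\rangle=0$. The only slip is a sign: under the paper's convention $\mathsf{Rot}(f)=-K\,\mathsf{grad}(f)$, so one actually obtains $\langle\zeta^K,\zeta^\perp\rangle=-\langle\mathsf{Rot}(\zeta^K),u^\perp\rangle=-2\kappa\langle u^K,u^\perp\rangle$, not $+2\kappa\langle u^K,u^\perp\rangle$; this is harmless for the conclusion, and you rightly flagged the sign as the one thing to verify.
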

\begin{proof} We compute
\begin{align*}
   \int_M  \zeta^K\zeta^\perp\omega_{M}=&
        \int_M\mathsf{rot}(u^K)\mathsf{rot}(u^\perp)\omega_{M}=
        -\int_M g\big(Ku^\perp,\mathsf{grad}(\mathsf{rot}(u^K))\big)\omega_{M}\\
        =&-\int_M g\big(u^\perp,\mathsf{Rot}(\mathsf{rot}(u^K))\big)\omega_{M}
        =-2\kappa\int_M g(u^\perp,u^K)\omega_{M}=0
\end{align*}
\end{proof}

 So on the sphere the dynamics of $\zeta$ happens on the component $\zeta^\perp$.  Then let us see what is the equation for $\zeta^\perp$. Substituting $u=u^K+u^\perp$ and  $\zeta=\zeta^K+\zeta^\perp$ to \eqref{vorti} and taking into account that (i) $-\Delta \zeta^K=2\,\kappa\zeta^K$, (ii) $g(\mathsf{grad}(\zeta^K),u^K)=0$ and (iii)  $g(\mathsf{grad}(\zeta^K),u^\perp)=2\kappa g(u^K,Ku^\perp)$ gives
 \[
     \zeta_t^\perp-\mu\Delta \zeta^\perp+2\kappa g(u^K,Ku^\perp)
     +g(\mathsf{grad}(\zeta^\perp),u^K)+g(\mathsf{grad}(\zeta^\perp),u^\perp) -2\mu\,\kappa\zeta^\perp= 0
 \]
This allows us to estimate more precisely the norm of $\zeta^\perp$. To this end we need the following
\begin{lemma}
For any vector fields $u$ and $v$ on a 2 dimensional manifold we have
\[
    \nabla_{(Ku)}v+\nabla_{u}(Kv)=\mathsf{div}(v)\,Ku+\mathsf{div}(Kv)\,u
\]
In particular
\[
    \nabla_{Ku}Ku-\mathsf{div}(Ku)\,Ku=\nabla_{u}u- \mathsf{div}(u)\,u
\]
\label{2d-lem}
\end{lemma}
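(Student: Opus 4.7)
The plan is to reduce the first identity to a pointwise algebraic identity on the tangent space, then derive the second identity from the first by a clever substitution.

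First I would observe that the rotation operator $K$ is given by $(Ku)^i=\varepsilon^i_{\ell}u^{\ell}$ where $\varepsilon^i_\ell$ is the Levi-Civita tensor, which is parallel: $\nabla\varepsilon=0$. Consequently $\nabla(Kv)=K\,\nabla v$, i.e.\ $(Kv)^i_{;j}=\varepsilon^i_{\ell}v^{\ell}_{;j}$. Writing both sides of the first claim in indices,
\[
\bigl(\nabla_{Ku}v+\nabla_u(Kv)\bigr)^i
=\varepsilon^j_{\ell}u^{\ell}\,v^i_{;j}+\varepsilon^i_{\ell}u^j\,v^{\ell}_{;j},
\qquad
\bigl(\mathsf{div}(v)Ku+\mathsf{div}(Kv)u\bigr)^i
=v^j_{;j}\,\varepsilon^i_{\ell}u^{\ell}+\varepsilon^j_{\ell}v^{\ell}_{;j}\,u^i,
\]
I would fix a point $p\in M$ and set $B=\nabla v|_p$, regarded as a linear endomorphism of $T_pM$. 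The claim then amounts to the endomorphism identity
\[
B(Ku)+K(Bu)=\mathsf{tr}(B)\,Ku+\mathsf{tr}(KB)\,u
\qquad\text{for every }u\in T_pM,
\]
or equivalently $BK+KB=\mathsf{tr}(B)\,K+\mathsf{tr}(KB)\,I$ as $2\times 2$ matrices.

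The main technical step is proving this matrix identity. The cleanest route is to polarize the Cayley--Hamilton theorem in dimension two: for any $2\times 2$ matrices $A,B$ one has
\[
AB+BA=\mathsf{tr}(A)\,B+\mathsf{tr}(B)\,A+\bigl(\mathsf{tr}(AB)-\mathsf{tr}(A)\mathsf{tr}(B)\bigr)I.
\]
Specialising to $A=K$ and using $\mathsf{tr}(K)=0$ gives exactly $KB+BK=\mathsf{tr}(B)\,K+\mathsf{tr}(KB)\,I$, which is what we need. (Alternatively one can expand $B$ in the basis $\{I,K,J_1,J_2\}$ of $\mathrm{End}(T_pM)$, with $J_1,J_2$ the two symmetric traceless matrices, and check the four cases directly: the symmetric traceless matrices anticommute with $K$, and $K^2=-I$.)

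For the second (``in particular'') identity, I would substitute $u\mapsto Ku$, $v\mapsto u$ into the first identity. Since $K^2=-I$, this gives
\[
\nabla_{-u}u+\nabla_{Ku}(Ku)=\mathsf{div}(u)(-u)+\mathsf{div}(Ku)\,Ku,
\]
which rearranges to $\nabla_{Ku}Ku-\mathsf{div}(Ku)\,Ku=\nabla_u u-\mathsf{div}(u)\,u$, as claimed. The only real obstacle is the pointwise matrix identity; the Cayley--Hamilton polarization handles this in one line, and everything else is bookkeeping with $\nabla\varepsilon=0$ and $K^2=-I$.
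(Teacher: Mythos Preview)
Your proof is correct and takes a genuinely different route from the paper. The paper proceeds by brute force: it writes out $Ku$, $\nabla_{Ku}v$, $\nabla_u(Kv)$ and $\mathsf{div}(Kv)$ explicitly in terms of $\varepsilon_{12}$ and the components $u^1,u^2,v^1_{;k},v^2_{;k}$, then simply expands the difference $w^i$ and checks that $w^1=w^2=0$ by hand. Your reduction to the pointwise endomorphism identity $BK+KB=\mathsf{tr}(B)\,K+\mathsf{tr}(KB)\,I$ via the polarized Cayley--Hamilton identity is more conceptual: it makes transparent \emph{why} the dimension-two hypothesis is essential (Cayley--Hamilton in degree two), and it avoids component chasing entirely. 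The paper's proof also does not spell out the ``in particular'' statement; your substitution $u\mapsto Ku$, $v\mapsto u$ together with $K^2=-I$ is the clean way to derive it, and is worth including explicitly. Both approaches are valid; yours is shorter and more illuminating, the paper's is more elementary in that it requires no outside identity.
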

\begin{proof}
First we have
\begin{align*}
    Ku&=g^{ih}\varepsilon_{hj}u^{j}=
    \varepsilon_{12}(g^{i1}u^{2}-g^{i2}u^{1})\\
     \nabla_{Ku}v&=g^{kh}\varepsilon_{hj}u^{j}v^{i}_{;k}=\varepsilon_{12}(g^{k1}u^{2}v^{i}_{;k}-g^{k2}u^{1}v^{i}_{;k})\\
       \nabla_{u}Kv&=g^{ih}\varepsilon_{hj}v^{j}_{;k}u^{k}=\varepsilon_{12}(g^{i1}v^{2}_{;k}u^{k}-g^{i2}v^{1}_{;k}u^{k})\\
       \mathsf{div}(Kv)&=g^{kh}\varepsilon_{hj}v^j_{;k}=
       \varepsilon_{12}(g^{k1}v^{2}_{;k}-g^{k2}v^{1}_{;k})
\end{align*}
We have to show that
\[
  w^i=\big(g^{k1}u^{2}-g^{k2}u^{1}\big)v^{i}_{;k}+
   \big( g^{i1}v^{2}_{;k}-g^{i2}v^{1}_{;k}\big)u^{k}
    -(g^{i1}u^{2}-g^{i2}u^{1})v^k_{;k}
    -(g^{k1}v^{2}_{;k}-g^{k2}v^{1}_{;k})u^i=0
\]
But simply expanding the components we can check that $w^1=w^2=0$.
\end{proof}

 Note that now we have shown that  $\zeta^\perp$ is orthogonal to the zeroth and first eigenspaces of $-\Delta$. But then by the minimum characterization of the eigenvalues this implies that
 \begin{equation}
   \lambda_2\int_M (\zeta^\perp)^2\omega_{M}=
    6\kappa \int_M (\zeta^\perp)^2\omega_{M}
   \le \int_M g(\mathsf{grad}(\zeta^\perp),\mathsf{grad}(\zeta^\perp))\omega_{M}
\label{omi2}
\end{equation}

 \begin{theorem} Let $\zeta^\perp$ be the solution of \eqref{vorti} on the sphere; then
\[
  \|\zeta^\perp\|^2\le Ce^{-8\mu\kappa t}
\]
\end{theorem}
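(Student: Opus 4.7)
My plan is to derive an energy identity for $\|\zeta^\perp\|^2$, show that the cross term coupling the Killing component and the orthogonal part vanishes, and then close with the Poincar\'e inequality \eqref{omi2} together with Gr\"onwall. First I would multiply the evolution equation for $\zeta^\perp$ (the one displayed just above the theorem) by $\zeta^\perp$ and integrate over $M$. The two convective contributions $\int_M \zeta^\perp g(\mathsf{grad}(\zeta^\perp),u^K)\omega_M$ and $\int_M \zeta^\perp g(\mathsf{grad}(\zeta^\perp),u^\perp)\omega_M$ both vanish by the divergence trick used in Theorem \ref{vorti-teo}, since $u^K$ and $u^\perp$ are divergence free. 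Integration by parts on the diffusive term gives $-\mu\|\mathsf{grad}(\zeta^\perp)\|^2$. What remains is
\[
\tfrac{1}{2}\frac{d}{dt}\|\zeta^\perp\|^2=-\mu\|\mathsf{grad}(\zeta^\perp)\|^2+2\mu\kappa\|\zeta^\perp\|^2-I,\qquad I:=2\kappa\int_M \zeta^\perp\,g(u^K,Ku^\perp)\,\omega_M.
\]

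The main obstacle is the cross term $I$; I expect $I=0$ and consider this the heart of the argument, since a Cauchy--Schwarz bound on $|I|$ would not preserve the sharp rate $8\mu\kappa$. To prove it I would pass to a stream function. Since $S^2$ admits no nonzero harmonic vector fields, the divergence-free field $u^\perp$ can be written $u^\perp=K\,\mathsf{grad}(\psi)$ for some single-valued scalar $\psi$; then $Ku^\perp=K^2\mathsf{grad}(\psi)=-\mathsf{grad}(\psi)$ and $\zeta^\perp=\mathsf{div}(Ku^\perp)=-\Delta\psi$. Substituting,
\[
I=2\kappa\int_M (-\Delta\psi)(-u^K(\psi))\,\omega_M=2\kappa\int_M \Delta\psi\cdot u^K(\psi)\,\omega_M.
\]
Because $u^K$ is Killing it generates an isometry, hence $\Delta$ commutes with $u^K$ acting on scalars: $\Delta(u^K\psi)=u^K(\Delta\psi)$. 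Also $\mathsf{div}(u^K)=0$, so $u^K$ is skew-adjoint as a derivation on $L^2(M)$. Combining self-adjointness of $\Delta$, the commutation, and skew-adjointness of $u^K$ gives
\[
\int_M \Delta\psi\cdot u^K(\psi)\,\omega_M=\int_M \psi\cdot u^K(\Delta\psi)\,\omega_M=-\int_M u^K(\psi)\cdot\Delta\psi\,\omega_M,
\]
so this integral equals its own negative and $I=0$.

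With the cross term eliminated, the identity simplifies to $\tfrac12\frac{d}{dt}\|\zeta^\perp\|^2=-\mu\|\mathsf{grad}(\zeta^\perp)\|^2+2\mu\kappa\|\zeta^\perp\|^2$. By the preceding lemma $\langle\zeta^K,\zeta^\perp\rangle=0$, and $\int_M \zeta^\perp\,\omega_M=0$ since $\zeta^\perp=\mathsf{div}(Ku^\perp)$ integrates to zero on a closed manifold. Thus $\zeta^\perp$ is orthogonal to both the zeroth and first eigenspaces of $-\Delta$, so \eqref{omi2} (with $\lambda_2=6\kappa$ on the sphere) gives $\|\mathsf{grad}(\zeta^\perp)\|^2\ge 6\kappa\|\zeta^\perp\|^2$. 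Plugging in,
\[
\tfrac{1}{2}\frac{d}{dt}\|\zeta^\perp\|^2\le(-6\mu\kappa+2\mu\kappa)\|\zeta^\perp\|^2=-4\mu\kappa\|\zeta^\perp\|^2,
\]
and Gr\"onwall delivers $\|\zeta^\perp(t)\|^2\le\|\zeta^\perp(0)\|^2\,e^{-8\mu\kappa t}$, as claimed.
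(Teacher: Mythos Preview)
Your argument is correct. The overall structure---energy identity, elimination of the convective terms, Poincar\'e via \eqref{omi2}, Gr\"onwall---matches the paper exactly. The genuine difference is in how you kill the cross term $I=2\kappa\int_M \zeta^\perp g(u^K,Ku^\perp)\,\omega_M$.

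The paper works directly at the vector-field level: it expands $\mathsf{div}\big(g(u^K,Ku^\perp)\,Ku^\perp\big)$, uses the Killing condition on $u^K$ and the two-dimensional identity of Lemma~\ref{2d-lem} to rewrite $g(\nabla_{Ku^\perp}Ku^\perp,u^K)$ as $g(\nabla_{u^\perp}u^\perp,u^K)+\zeta^\perp g(u^K,Ku^\perp)$, and then invokes Lemma~\ref{c-lemma} to conclude that the surviving integral vanishes. Your route instead passes through a stream function $\psi$ for $u^\perp$ (available because $H^1(S^2)=0$), reduces $I$ to $2\kappa\int_M \Delta\psi\cdot u^K(\psi)\,\omega_M$, and then observes that this integral is its own negative by self-adjointness of $\Delta$, the commutation $[\Delta,u^K]=0$ for Killing fields, and skew-adjointness of the divergence-free derivation $u^K$. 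This is a clean, symmetry-based argument that sidesteps the somewhat ad hoc pointwise identity of Lemma~\ref{2d-lem}; the trade-off is that it uses the global stream function, so it is specific to manifolds with trivial first cohomology, whereas the paper's computation for $I$ would go through on any surface where the relevant Killing field exists.
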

\begin{proof} First
\begin{align*}
   \tfrac{1}{2}  \tfrac{d}{dt} \|\zeta^\perp\|^2=&\mu\int_M\zeta^\perp \Delta \zeta^\perp\omega_{M}-
   2\kappa\int_M  g(u^K,Ku^\perp)\zeta^\perp\omega_{M}\\
  - &\int_M g(\mathsf{grad}(\zeta^\perp),u^K)\zeta^\perp\omega_{M}-
   \int_M g(\mathsf{grad}(\zeta^\perp),u^\perp)\zeta^\perp\omega_{M}+
      2\mu\kappa\int_M (\zeta^\perp)^2\omega_{M}
\end{align*}
Then we have
\[
      \int_M g(\mathsf{grad}(\zeta^\perp),u^K)\zeta^\perp\omega_{M}=
   \int_M g(\mathsf{grad}(\zeta^\perp),u^\perp)\zeta^\perp\omega_{M}=0
\]
by the same argument as in the proof of Theorem \ref{vorti-teo}. Then we compute
\begin{align*}
      \mathsf{div}\big(g(u^K,Ku^\perp)Ku^\perp\big)=&
       g(u^K,Ku^\perp) \zeta^\perp+g\big(\nabla_{Ku^\perp}u^K,Ku^\perp\big)+
       g\big(\nabla_{Ku^\perp}Ku^\perp,u^K\big)\\
       =& 2g(u^K,Ku^\perp) \zeta^\perp+
       g\big(\nabla_{u^\perp}u^\perp,u^K\big)
\end{align*}
where the second equality holds because $u^K$ is Killing and  Lemma \ref{2d-lem}. Hence
\[
  \int_M  g(u^K,Ku^\perp)\zeta^\perp\omega_{M}=0
\]
by Lemma \ref{c-lemma}.
Then the inequality \eqref{omi2} gives
\begin{align*}
   \tfrac{1}{2}  \tfrac{d}{dt} \|\zeta^\perp\|^2=&
   -\mu\int_Mg(\mathsf{grad}(\zeta^\perp),\mathsf{grad}(\zeta^\perp))\omega_{M}-
      2\mu\kappa\int_M (\zeta^\perp)^2\omega_{M}\\
      \le& -8\mu\kappa \int_M (\zeta^\perp)^2\omega_{M}
\end{align*}
\end{proof}

\section{Coriolis}
Let us then consider the Coriolis effect. For the simplicty of notation let us assume that our manifold is now the unit sphere $S^2$.  The rotation of the sphere has two effects: centrifugal force and Coriolis force. Since the centrifugal force is conservative one can absorb it to the pressure. This modified pressure is still denoted by $p$. From the Coriolis force there comes a new term to the system which is of the form $a\,Ku$ where $a$ is some function. Let us indicate how to express $a$ in spherical coordinates $(\theta,\varphi)$ where $\theta$ is the longitude and $\varphi$ is the colatitude.  Let us interpret $S^2$ as a submanifold of $\mathbb{R}^3$.  If we now choose $x_3$ axis to be  the axis of rotation with the rotation vector $(0,0,\omega)$ then $a=2\omega\cos(\varphi)$.
The system can thus be written as
\begin{equation}
\begin{aligned}
  & u_t+\nabla_u u-\mu L u+a\,Ku+\mathsf{grad}( p)=0\\
 & -\Delta p-\mathsf{tr}((\nabla u)^2)-\, g(u,u)-\mathsf{div}\big(a\,Ku\big)=0\\
  &  \mathsf{div}(u)=0
 \end{aligned}
  \label{ns-2d-cor}
\end{equation}
Since $g(Ku,u)=0$ the Coriolis term has no effect on the norm: we still have
\[
      \frac{d}{dt}\, \|u\|^2=-\mu \int_M g(S_{u},S_{u})\omega_{M}
\]
However, not all Killing fields are now solutions. Let $u$ be Killing; if it is a solution to \eqref{ns-2d-cor} then we should have
\[
 \mathsf{rot}(aKu)=-a\,\mathsf{div}(u)-g(\mathsf{grad}(a),u)=-g(\mathsf{grad}(a),u)=0
\]
Hence in spherical coordinates $u=c\,\partial_\theta$ where $c$ is constant. Simple computations show that the corresponding pressure is
\[
    p_K=\tfrac{1}{2}\,\big(c^2\sin(\varphi)^2+c\,\omega \cos(2\varphi)\big)
\]
Let us thus denote by $(u^K,p_K)$ this solution which in spherical coordinates are given by above formulas.
Then we can also in this situation try to look for solutions of the form $u=u^K+\hat u$ and $p=p_K+\hat p$. Note that here we do not have the result like Theorem \ref{energia-1}. In spite of this it turns out that the energy of $\hat u$ decreases monotonically.
\begin{theorem}
Let $u=u^K+\hat u$, $p=p_K+\hat p$  be a solution to (\ref{ns-2d-cor}). Then
\[
    \tfrac{d}{dt}\|\hat u\|^2 \,\leq 0
\]
\end{theorem}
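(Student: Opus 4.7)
The plan is to derive an evolution equation for $\hat u$ alone and then compute $\tfrac{d}{dt}\|\hat u\|^2$ by taking the $L^2$ inner product of that equation with $\hat u$, showing that everything on the right-hand side except the $L$-diffusion term vanishes.

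First, since $(u^K,p_K)$ is itself a (stationary) solution of \eqref{ns-2d-cor}, subtracting its equation from the equation for $u=u^K+\hat u$ gives
\[
   \hat u_t+\nabla_{u^K}\hat u+\nabla_{\hat u}u^K+\nabla_{\hat u}\hat u
   -\mu L\hat u+a\,K\hat u+\mathsf{grad}(\hat p)=0,
\qquad \mathsf{div}(\hat u)=0.
\]
Pairing with $\hat u$ in $L^2(M)$ produces six terms besides $\tfrac{1}{2}\tfrac{d}{dt}\|\hat u\|^2$. The pressure term $\langle \mathsf{grad}(\hat p),\hat u\rangle=-\langle \hat p,\mathsf{div}(\hat u)\rangle=0$ as usual, and the diffusion term equals $-\tfrac{\mu}{2}\int_M g(S_{\hat u},S_{\hat u})\omega_M\le 0$ by Lemma \ref{var-lemma}.

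For the three convective terms, Lemma \ref{b-lemma} handles $\langle\nabla_{u^K}\hat u,\hat u\rangle=0$ (since $\mathsf{div}(u^K)=0$) and $\langle\nabla_{\hat u}\hat u,\hat u\rangle=0$ (since $\mathsf{div}(\hat u)=0$). The remaining term $\langle\nabla_{\hat u}u^K,\hat u\rangle$ vanishes pointwise by the Killing condition \eqref{killing-ehto} applied with $v=w=\hat u$. Finally, the Coriolis term vanishes pointwise: $g(a\,K\hat u,\hat u)=a\,g(K\hat u,\hat u)=0$ because $K$ is a pointwise rotation by $\pi/2$ in each tangent plane, so it produces a vector orthogonal to its argument.

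Putting these observations together yields
\[
  \tfrac{1}{2}\,\tfrac{d}{dt}\|\hat u\|^2=-\tfrac{\mu}{2}\int_M g(S_{\hat u},S_{\hat u})\,\omega_M\le 0,
\]
which is the desired conclusion. I expect no serious obstacle here: the proof is essentially a bookkeeping exercise in which the Killing property of $u^K$ removes the cross term $\langle \nabla_{\hat u}u^K,\hat u\rangle$, the incompressibility of both $u^K$ and $\hat u$ removes the transport terms via Lemma \ref{b-lemma}, and the skew-symmetry of $K$ neutralizes the Coriolis contribution. The only subtle point worth flagging is that $u^K$, being time-independent, must itself solve the full stationary system \eqref{ns-2d-cor} (which was established just before the theorem via the explicit formula for $p_K$), so that the subtraction step is legitimate.
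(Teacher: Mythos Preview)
Your proof is correct and follows essentially the same approach as the paper: derive the equation for $\hat u$ by subtracting the stationary solution, pair with $\hat u$, and eliminate each term using Lemma~\ref{b-lemma}, the Killing condition \eqref{killing-ehto}, the skew-symmetry of $K$, and Lemma~\ref{var-lemma}. Your write-up is in fact slightly more explicit than the paper's about which fact kills which term, and your closing remark about the legitimacy of the subtraction step is a nice touch.
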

\begin{proof}
Computing as before we find the following system for $(\hat u,\hat p)$.
\begin{align*}
   & \hat u_{t}+\nabla_{\hat u} u^{K}+\nabla_{u^K} \hat u+\nabla_{\hat u} \hat u-\mu\,L \hat u+a\,K \hat u+\mathsf{grad}(\hat p)=0 \\
    & \mathsf{div}(\hat u)=0
\end{align*}
Then the variational formulation can be written as
\begin{align*}
    \tfrac{1}{2}\, \tfrac{d}{dt}\, \|\hat u\|^2&=-\int_{M}g(\nabla_{\hat u} u^{K},\hat u)\,\omega_{M}-\int_{M}g(\nabla_{u^{K}} \hat u,\hat u)\,\omega_{M}-\int_{M}g(\nabla_{\hat u} \hat u,\hat u) \,\omega_{M} \\
     &+\mu\,\int_{M}g\big(L\hat u,\hat u\big)\,\omega_{M}-\int_{M}a\,g\big(K\hat u,\hat u\big)\,\omega_{M}-\int_{M}g\big(\mathsf{grad}(\hat p),\hat u\big)\,\omega_{M}
\end{align*}
Then applying Lemma \ref{b-lemma}, Lemma \ref{d-lemma}, divergence theorem and since $g(Ku,u)=0$, we get
\[
    \tfrac{d}{dt}\, \|\hat u\|^2=-\mu \int_M g(S_{\hat u},S_{\hat u})\,\omega_{M}
\]
\end{proof}

Hence the Coriolis term tends to align the flow along the circles of latitude. Note finally that this conclusion depends on the choice of the diffusion operator. If the Hodge Laplacian is used then the solutions simply approach zero because on the sphere there are no harmonic vector fields.


\appendix

\section{Notation and some formulas }
Let us review some basic notions of Riemannian geometry. For details we refer to  \cite{petersen} and  \cite{spivak2}. For curvature tensor and Ricci tensor there are several different conventions regarding the indices and signs. We will follow the conventions in \cite{petersen}.

The curvature tensor is denoted by $R$ and Ricci tensor by $\mathsf{Ri}$. In coordinates we have
\begin{equation}
\begin{aligned}
     \mathsf{Ri}_{jk}&=R^i_{ijk}=g^{i\ell}R_{ij k\ell}  \\
      \mathsf{Ri}_j^k&=g^{k\ell}\mathsf{Ri}_{j\ell}=g^{k\ell}R^i_{ij\ell }=g^{i\ell}R^k_{ji\ell}
\end{aligned}
\label{ricci-tensori}
\end{equation}
The scalar curvature is $\mathsf{R_{sc}}=\mathsf{Ri}_k^k$. The Bianchi identities are
\begin{equation}
\begin{aligned}
    R^i_{jk\ell}+ R^i_{k\ell j}+ R^i_{\ell jk}=0   \\
    R_{hijk;\ell}+R_{hi\ell j;k}+R_{hik\ell;j}=0
\end{aligned}
\label{bianchi1}
\end{equation}
For general  tensors $A$ of type $(m, n)$ the \emph{Ricci identity} has the form
\begin{equation}
    A^{j_{1}\cdots j_{m}}_{i_{1}\cdots i_{n};ij}-A^{j_{1}\cdots j_{m}}_{i_{1}\cdots i_{n};ji}=\sum^{n}_{q=1}A^{j_{1}\cdots j_{m}}_{i_{1}\cdots i_{q-1}\ell i_{q+1}\cdots i_{n}}R^{\ell}_{iji_{q}}-\sum^{m}_{p=1}A^{j_{1}\cdots j_{p-1}\ell j_{p+1}\cdots j_{m}}_{i_{1}\cdots i_{n}}R^{j_{p}}_{ij\ell }
\label{general-ricci-identity}
\end{equation}
The following consequences where Ricci identity is used twice are used in many places
\begin{equation}
\begin{aligned}
   u^k_{;\ell ij}&=u^{k}_{;ij \ell }+u^{k}_{;h}R^{h}_{\ell j i}-u^{h}_{;i}R^{k}_{\ell j h}-u^h_{;j }R^{k}_{\ell ih}-u^hR^{k}_{\ell ih;j }  \\
   u^k_{;ki j}&=u^k_{;i jk}+u^k_{;h}R^h_{kji}-u^h_{;i}\mathsf{Ri}_{ jh}-u^{h}_{;j}\mathsf{Ri}_{ ih} -u^h\mathsf{Ri}_{h i;j}
\end{aligned}
\label{ricci-twice}
\end{equation}
In the two dimensional case the curvature tensor can be written as follows:
\begin{equation}
\begin{aligned}
     R_{ijk\ell}=&\kappa (g_{i\ell}g_{jk}-g_{ik}g_{j\ell})   \\
     R^\ell_{ijk}=&\kappa\big(g_{jk}\delta^\ell_i-g_{ik}\delta^\ell_j\big)   \\
    \varepsilon^j_\ell  R^\ell_{ijk}=&-\kappa\,\varepsilon_{ik}   \\
\end{aligned}
\label{curva2d}
\end{equation}
Here $\kappa$ is the Gaussian curvature.

\section{Operators $\mathsf{rot}$, $\mathsf{Rot}$ and $\mathsf{curl}$}
\label{curl-ja-muut}
Let us denote by $V=C^\infty(M)$ the space of smooth functions on $M$, let $\mathfrak{X}(M)$  be the space of vector fields and $\bigwedge^k\!M$ the space of $k$ forms.  Let us  suppose that $M$ is two dimensional.  Then we define the operator $\mathsf{rot}$ by requiring that the following diagram commutes.
\begin{equation}
\xymatrix{
0\ar[r]&V\ar[r]^-{\mathsf{grad}}\ar@{=}[d]&\mathfrak{X}(M)\ar[d]^-{\flat}\ar[r]^-{\mathsf{rot}}
  & V\ar[d]^-{\ast}\ar[r] &0\\
0\ar[r]&
V\ar[r]^-{d} &
\bigwedge^1\!M \ar[r]^-{d} & \bigwedge^2\!M \ar[r] &0
}
\label{rot-kaavio}
\end{equation}
Here $\flat$ is the usual map $T_pM\to T_p^\ast M$ defined by the Riemannian metric and $\ast$ is the Hodge operator. To express this in coordinates we first define
\[
  \varepsilon=\sqrt{\det(g)}\big(dx_1\otimes dx_2-dx_2\otimes dx_1\big)
\]
Note that $\nabla \varepsilon=0$. Then it is convenient to introduce the map
\begin{equation}
   Ku=g^{ki}\varepsilon_{ij}u^j=\varepsilon^k_{j}u^j
\label{K-def}
\end{equation}
 Intuitively the operator $K$  rotates the vector field by 90 degrees.
Then we can write
\[
    \mathsf{rot}(u)=\mathsf{div}(Ku)=\varepsilon^k_j u^j_{;k}
\]
We will also need the $\mathsf{Rot}$ operator which is defined by the following diagram
\begin{equation}
\xymatrix{
0\ar[r]&V\ar[r]^-{\mathsf{Rot}}\ar@{=}[d]
  &\mathfrak{X}(M)\ar[d]^-{\iota_\omega}\ar[r]^-{\mathsf{div}}
  & V\ar[d]^-{\ast}\ar[r] &0\\
0\ar[r]&
V\ar[r]^-{d} &
\bigwedge^1\!M \ar[r]^-{d} &  \bigwedge^2\!M \ar[r] &0
}
\label{Rot-kaavio}
\end{equation}
Here $\iota_\omega$ is the interior product. In coordinates we have
\[
     \mathsf{Rot}(u)=-K\,\mathsf{grad}(u)=- \varepsilon^k_i g^{ij}u_{;j}=-  \varepsilon^{kj} u_{;j}
\]
Let us now suppose that $M$ is three dimensional. Then we define the operator $\mathsf{curl}$ by requiring that the following diagram commutes.
\begin{equation}
\xymatrix{
0\ar[r]&V\ar[r]^-{\mathsf{grad}}\ar@{=}[d]&\mathfrak{X}(M)\ar[d]^-{\flat}\ar[r]^-{\mathsf{curl}}
  &\mathfrak{X}(M)\ar[d]^-{\iota_\omega}\ar[r]^-{\mathsf{div}}
  & V\ar[d]^-{\ast}\ar[r] &0\\
0\ar[r]&
V\ar[r]^-{d} &
\bigwedge^1\!M \ar[r]^-{d} & \bigwedge^2\!M \ar[r]^-{d}  & \bigwedge^3\!M \ar[r] &0
}
\label{curl-kaavio}
\end{equation}
Let us now define
\begin{align*}
\varepsilon=  \sqrt{\det(g)}\,\Big(&
    dx_1\otimes dx_2\otimes dx_3
    -dx_2\otimes dx_1\otimes dx_3
    -dx_3\otimes dx_2\otimes dx_1\\
   & -dx_1\otimes dx_3\otimes dx_2
    +dx_2\otimes dx_3\otimes dx_1
    +dx_3\otimes dx_1\otimes dx_2\Big)
\end{align*}
Again $\nabla \varepsilon=0$. Then we can express $\mathsf{curl}$ in coordinates by the formula
\[
    \mathsf{curl}(u)=\varepsilon^{ijk}g_{j\ell}u^\ell_{;i}
\]
We can also define the cross product of two vector fields by
\[
  u\times v=\sharp\,\ast (\flat u\wedge \flat v)=g^{\ell k}\varepsilon_{ijk} u^i v^j
\]

\printbibliography

\end{document}